\theoremstyle{plain}
\newtheorem{theorem}{Theorem}[section]
\newtheorem{lemma}[theorem]{Lemma}
\newtheorem{corollary}[theorem]{Corollary}
\theoremstyle{definition}
\newtheorem{example}[theorem]{Example}
\theoremstyle{remark}
\newtheorem{remark}{Remark}
\begin{document}


\title{Inertial primal-dual dynamics with Hessian-driven damping and Tikhonov regularization for convex-concave bilinear saddle point problems}

\author{
\name{Xiangkai Sun, \thanks{CONTACT Xiangkai Sun. Email: sunxk@ctbu.edu.cn} Liang He and Xian-Jun Long }
\affil{Chongqing Key Laboratory of  Statistical Intelligent Computing and Monitoring, School of Mathematics and Statistics,
 Chongqing Technology and Business University,
Chongqing 400067, China }
}

\maketitle

\begin{abstract}
This paper deals with a second-order  primal-dual dynamical system with Hessian-driven damping and Tikhonov regularization terms in connection with a convex-concave bilinear saddle point problem. We  first obtain a fast convergence rate of  the primal-dual gap along the trajectory generated by the dynamical system, and provide some integral estimates. Then,   based on the setting of the parameters involved,  we demonstrate that both the convergence rate of the primal-dual gap and the strong convergence of the trajectory can be achieved simultaneously.  Furthermore, we  evaluate the performance of the proposed system using two numerical examples.
\end{abstract}

\begin{keywords}
Saddle point problems; primal-dual dynamical system;   Tikhonov regularization; strong convergence;  Hessian-driven damping
\end{keywords}

\amscodename\  34D05; 37N40; 46N10; 90C25.

\section{Introduction}

In recent years, there has been substantial interest in studying the close link between second-order continuous-time  dynamical systems and optimization algorithms. On the one hand, by developing the asymptotic properties of second-order dynamical systems, one can efficiently solve optimization problems. On the other hand, it  provides thoughtful insights to understand the existing optimization algorithms from a continuous-time perspective. Furthermore, an appropriate time discretization of second-order dynamical systems  can potentially lead to inertial optimization algorithms that inherit the convergence properties of  the original dynamical systems. We refer the readers to  \cite{Polyak1, su16, Attouch2018F,AttouchChbani,Attouch2022D,L2023O,Chbani2024O}
for more details.

For most of the studies in this area, we observe that  second-order dynamical systems  only  controlled by viscous damping may exhibit undesired oscillations when it is used to solving optimization problems. However,  the oscillations of trajectories can be mitigated  by virtue of  a geometric damping that is driven by the Hessian of the objective functions. Therefore, for solving optimization problems, the research directions have focused on investigating second-order dynamical systems with Hessian-driven damping.
Nowadays, there have been a wide variety of works devoted to second-order dynamical systems with Hessian-driven damping for finding the solution set of the  unconstrained convex optimization problem:
\begin{equation}\label{unconstrained}
\min_{x\in \mathbb{R}^n} f(x),
\end{equation}
where $ f : \mathbb{R}^n \to \mathbb{R} $ is a twice continuously differentiable convex function.

In order to solve problem (\ref{unconstrained}),  Alvarez et al. \cite{Hessian} first introduce the second-order dynamical system  with  Hessian-driven damping:
\begin{equation}\label{alvar}
\ddot{x}(t)+\alpha\dot{x}(t)+\gamma\nabla^2f(x(t))\dot{x}(t)+\nabla f(x(t))=0,
\end{equation}
 where $\alpha$ and $\gamma$ are positive parameters.  This system (\ref{alvar}) involves a constant viscous damping coefficient $\alpha$  and a constant Hessian-driven damping coefficient $\gamma$, which effectively reduce the oscillations associated with the  Heavy-ball  with
 friction system.
 Subsequently,  Attouch et al. \cite{Attouch2016F} propose  a more general second-order dynamical    system as follows:
\begin{equation}\label{Attouch2016F}
\ddot{x}(t)+\frac{\alpha}{t}\dot{x}(t)+\gamma\nabla^2f(x(t))\dot{x}(t)+\nabla f(x(t))=0,
\end{equation}
where  $\frac{\alpha}{t}$
is the  asymptotic vanishing viscous damping coefficient, and $ \gamma>0 $ is the constant  Hessian-driven  damping coefficient. They show that, as time $t$ approaches infinity, the fast convergence rate of the objective function value along the trajectory is  $ \mathcal{O}\left(\frac{1}{t^2}\right) $.
 To further improve the  convergence rate, by using a time scaling technique, Attouch et al. \cite{Attouch2022F} propose the following second-order dynamical system  with  a more general  Hessian-driven damping coefficient:
\begin{equation}\label{gen}
\ddot{x}(t)+\frac{\alpha}{t}\dot{x}(t)+\delta(t)\nabla^2f(x(t))\dot{x}(t)+\xi(t)\nabla f(x(t))=0,
\end{equation}
where $\delta:\left[t_0,+\infty \right) \to \left[0,+\infty \right) $ is the  Hessian-driven  damping function and $ \xi:\left[t_0,+\infty \right) \to \left[0,+\infty \right) $ is the time scaling function. They show that the convergence rate of the objective function value along the trajectory is $\mathcal{O}\left(\frac{1}{t^2\xi(t)}\right) $.

In the quest for obtaining strong convergence results of solution trajectories, Bo\c{t} et al. \cite{Bot2021T} consider the following Tikhonov regularized second-order dynamical system with asymptotic vanishing viscous damping and  Hessian-driven damping:
 \begin{equation}\label{Bot2021T}
 \ddot{x}(t)+\frac{\alpha}{t}\dot{x}(t)+\gamma\nabla^2f(x(t))\dot{x}(t)+\nabla f(x(t))+\epsilon(t)x(t)=0,
 \end{equation}
where $ \epsilon: \left[t_0,+\infty \right) \to \left[0,+\infty \right) $ is the Tikhonov regularization function. They   show that it inherits the fast convergence properties of the dynamical system (\ref{Attouch2016F}).  Furthermore, they  establish  the strong convergence of trajectory generated by the dynamical system (\ref{Bot2021T}) to the minimum norm solution  of problem (\ref{unconstrained}). Moreover, inspired by the fast convergence rates attached to the heavy ball method in the strongly convex case \cite{Polyak1},  Attouch et al. \cite{Attouch2023A}   propose the following  Tikhonov regularized second-order dynamical system where the viscous damping coefficient is proportional to the square
root of the Tikhonov regularization parameter:
 \begin{equation}\label{att23}
 \ddot{x}(t)+\zeta \sqrt{\epsilon(t)}\dot{x}(t)+\gamma\nabla^2f(x(t))\dot{x}(t)+\nabla f(x(t))+\epsilon(t)x(t)=0,
 \end{equation}
where $\zeta>0$. They establish the fast convergence rate of the objective function value along the trajectory generated by the system (\ref{att23}), and show that the trajectory converges strongly to the minimal norm solution of problem (\ref{unconstrained}).  For a bibliographical review of the development of Tikhonov regularized second-order dynamical systems with Hessian-driven damping for unconstrained optimization problem  (\ref{unconstrained}), we refer the readers to the survey paper \cite{jems23}  and some recent papers \cite{alec21,bagy23,cse24,L2024S,Zhong2024F} for references.

It is important to mention that  all the aforementioned papers concentrate on studying the unconstrained optimization problem (\ref{unconstrained}).
Recently, more researchers have extended second-order  primal-dual dynamical systems from unconstrained optimization problem (\ref{unconstrained}) to the following  linearly constrainted convex optimization problem:
\begin{eqnarray}\label{constrained}
\begin{cases}
\mathop{\mbox{min}}\limits_{x\in\mathbb{R}^n}~~{f(x)}\\
\mbox{s.t.}~~Kx=b,
\end{cases}
\end{eqnarray}
where $K:\mathbb{R}^n\rightarrow\mathbb{R}^m$ is a linear operator and $b\in\mathbb{R}^m$.
 In order to solve problem (\ref{constrained}),  many successful treatments of second-order  primal-dual dynamical systems with viscous damping,  time scaling, and extrapolation coefficients  have been obtained from several different perspectives, see, e.g.  \cite{Chbani2024O,Bot2021I,He2021C,He2022S,heaa23,Hulett2023T,Zeng,2024zhu}.

 However, in contrast to   unconstrained optimization problem (\ref{unconstrained}), we observe that there exist only  few papers devoting to the investigation of second-order  primal-dual dynamical systems with  Hessian-driven damping for solving problem (\ref{constrained}).  He et al. \cite{He2023C} propose the following second-order plus first-order  primal-dual dynamical system with general Hessian-driven damping:
\begin{equation}\label{hemix}
\begin{cases}\ddot{x}(t)+\frac{\alpha}{t}\dot{x}(t)+\delta(t)\frac{d}{dt}\big(\nabla_{x}\mathcal{\widehat{L}}\left(x(t),\lambda(t)\right)\big)+\xi(t)\nabla_{x}\mathcal{\widehat{L}}\left(x(t),\lambda(t)\right)=0,\\
\dot{\lambda}(t)-\eta(t)\nabla_{\lambda} \mathcal{\widehat{L}}\left(x(t)+\frac{t}{\alpha-1}\dot{x}(t),\lambda(t)\right)=0,\end{cases}
\end{equation}
where $ \alpha>1 $, $\frac{\alpha}{t}$
is the  asymptotic vanishing viscous damping coefficient, $\delta:\left[t_0,+\infty \right) \to \left[0,+\infty \right) $ is the  Hessian-driven  damping function,  $ \xi, \eta:\left[t_0,+\infty \right) \to \left[0,+\infty \right) $ are the time scaling functions, and $ \frac{t}{\alpha-1} $ is the extrapolation parameter. Here, $\mathcal{\widehat{L}}$  is the known Lagrangian function defined by
$$\mathcal{\widehat{L}}(x,\lambda):= f(x)+\langle \lambda, Kx-b\rangle.$$
They show that, as time $t$ approaches infinity, the fast convergence rates of the
Lagrangian residual, the objective residual and the feasibility violation along the trajectory  generated by the dynamical system (\ref{hemix}) are $\mathcal{O}\left(\frac{1}{t \eta(t)}\right)$. More recently,
Csetnek and L\'{a}szl\'{o} \cite{cl24} associate to (\ref{constrained}) the Tikhonov regularized second-order primal-dual dynamical system:
\begin{equation}\label{csela}\resizebox{0.93\hsize}{!}{$
\small{\begin{cases}\ddot{x}(t)+\frac{\alpha}{t^q}\dot{x}(t)+\beta t^q\frac{d}{dt}\left(\nabla_{x}\mathcal{\widehat{L}}\left(x(t),y(t)\right)\right)+(1-\frac{\gamma}{t^s})\nabla_{x}\mathcal{\widehat{L}}\left(x(t),y(t)\right)+\frac{c}{t^{2q+s}}{x}(t)=0,\\
 \ddot{y}(t)+\frac{\alpha}{t^q}\dot{y}(t)-\beta t^q\frac{d}{dt}\left(\nabla_y \mathcal{\widehat{L}}\left(x(t),y(t)\right)\right)-(1-\frac{\gamma}{t^s})\nabla_y\mathcal{\widehat{L}}\left(x(t),y(t)\right)+\frac{c}{t^{2q+s}}{y}(t)=0,\end{cases}} $}
\end{equation}
where $ \alpha>1 $, $q,$ $\beta$, $\gamma$, $s$, $c>0$ and $0<q+s<1$. Under mild conditions on the parameters involved, they obtain strong convergence of the trajectories to the minimal norm primal-dual solution, as well as   fast convergence rates for the feasibility measure,  velocities and  objective function values  along the trajectory generated
by the dynamical system (\ref{csela}).

On the other hand,  the problem (\ref{constrained}) is closely related to the convex-concave saddle-point problem, it is therefore natural to employ the  second-order primal-dual dynamical system approaches  to solve   convex-concave saddle point problems. However, it is worth mentioning that there is no   research on Tikhonov regularized second-order primal-dual dynamical systems with Hessian-driven damping for solving  convex-concave saddle point problems although there are some preliminary results available
in \cite{zengifac,ding,2024he,Luo2024AC,ours} on some characterizations of second-order primal-dual dynamical systems with viscous damping, time scaling
and extrapolation coefficients.  This is also the main motivation for the study of second-order primal-dual dynamical systems with Hessian-driven damping and Tikhonov regularization  for solving    convex-concave saddle point problems.

 Motivated by the works reported in \cite{L2024S,ding,2024he}, in this paper, we consider  the following bilinearly coupled convex-concave saddle point problem:
 \begin{equation} \label{PD}
\min_{x\in \mathbb{R}^n} \max_{y\in \mathbb{R}^m}\mathcal{L}(x,y) := f(x)+\left \langle Kx,y \right \rangle-g(y),
\end{equation}
where  $ K:\mathbb{R}^n \to \mathbb{R}^m $ is a linear operator, $\langle\cdot,\cdot\rangle$ represents the standard inner
product of vectors, and both $ f : \mathbb{R}^n \to \mathbb{R} $ and $g : \mathbb{R}^m \to \mathbb{R}$  are twice continuously differentiable convex functions.
For  problem $(\ref{PD})$, we introduce the  following Tikhonov regularized second-order primal-dual dynamical system,  which consists of slow  viscous damping, extrapolation, Hessian-driven damping and time scaling,
\begin{equation}\label{dyn}\resizebox{0.925\hsize}{!}{$
\small{\begin{cases}\ddot{x}(t)+\frac{\alpha}{t^q}\dot{x}(t)+\gamma\frac{d}{dt}\left(\nabla_{x}\mathcal{L}_t\left(x(t),y(t)+\theta(t)\dot{y}(t)\right)\right)+t^s\nabla_{x}\mathcal{L}_t\left(x(t),y(t)+\theta(t)\dot{y}(t)\right)=0,\\
 \ddot{y}(t)+\frac{\alpha}{t^q}\dot{y}(t)-\gamma\frac{d}{dt}\left(\nabla_y \mathcal{L}_t\left(x(t)+\theta (t)\dot{x}(t),y(t)\right)\right)-t^s\nabla_y \mathcal{L}_t\left(x(t)+\theta (t)\dot{x}(t),y(t)\right)=0,\end{cases}} $}
\end{equation}
where $ t\geq t_0>0 $, $\alpha>1$, $0<q<1$, $\gamma>0$ and $ s>0 $, $ \mathcal{L}_t : \mathbb{R}^n \times \mathbb{R}^m \to \mathbb{R}$ is the augmented Lagrangian saddle function (see  (\ref{asd})  for details), $\frac{\alpha}{t^q}  $  is the slow viscous damping parameter, $\gamma$ is the  Hessian-driven damping parameter, $ t^s $ is the time scaling parameter, and  $ \theta(t) $ is the extrapolation parameter defined as
\begin{equation}\label{theta}
  \theta(t):=\frac{t^{2q+s}-2\gamma qt^{2q-1}+\gamma t^q}{(\alpha-1)(t^{q+s}-\gamma q t^{q-1})} .
\end{equation}
We assume that the initial time $ t_0\textgreater\sqrt[s+1]{\gamma q} $, which ensures  $ \theta(t) $ is properly defined for $ t\geq t_0$. The contributions of this paper can be more specifically stated as follows:
\begin{itemize}
\item[{\rm (i)}] We propose a new second-order primal-dual dynamical system (\ref{dyn}) with Hessian-driven damping and  Tikhonov regularization for solving convex-concave saddle point problems. In comparison to the dynamical systems introduced in \cite{2024he,ding,ours},  the  system (\ref{dyn})  incorporates slow  viscous damping, extrapolation, time scaling,  Hessian-driven damping and Tikhonov regularization.
 \item[{\rm (ii)}]  Under mild assumptions on the parameters, we show that the primal-dual gap along the trajectories generated by the dynamical system (\ref{dyn})  enjoys $ \mathcal{O}\left(\frac{1}{t^{2q+s}}\right) $ convergence rate.

\item[{\rm (iii)}] By appropriately adjusting these parameters,  we show that there exists a single regime where both the   convergence rate  of the primal-dual gap and the strong convergence of trajectories  towards  the element of minimal norm of the saddle point set can be simultaneously ensured.

\item[{\rm (iv)}] Through numerical experiments, we demonstrate that  Hessian-driven damping can significantly neutralize the oscillations that occur during iterations.
\end{itemize}

The rest of this paper is organized as follows. In Section 2,  we recall some basic notations and present some preliminary results. In Section 3, we  obtain  the fast convergence properties of the primal-dual gap, and give some integral estimate results.  In Section 4, we establish a simultaneous result concerning the  convergence rate of the augmented primal-dual gap and  the  strong convergence of  the  trajectory  generated by the dynamical system (\ref{dyn}) to the minimal norm solution of the problem (\ref{PD}). In Section 5, we conduct some numerical experiments to illustrate the  theoretical results.

\section{Preliminaries}
Throughout this paper,   $ \mathbb{R}^n$  is the $ n $-dimensional Euclidean space
with inner product and Euclidean norm  denoted by $ \langle \cdot ,\cdot \rangle $ and $ \| \cdot \| $, respectively.
The norm of the Cartesian
 product $ \mathbb{R}^n\times \mathbb{R}^m $ is defined as
\begin{equation*}
\|(x,y)\|=\sqrt{\|x\|^2+\|y\|^2}, ~~~  \forall x\in \mathbb{R}^n  ~\mbox{and}~ y\in  \mathbb{R}^m.
\end{equation*}
For the linear operator $ K:\mathbb{R}^n  \to \mathbb{R}^m $. We denote that
  \begin{equation*}
  \|K\|:=\max\{\|Kx\|:x\in \mathbb{R}^n \mbox{ with } \|x\|\leq1\},
  \end{equation*}
 and $ K^* : \mathbb{R}^m  \to \mathbb{R}^n$ is the adjoint operator of $K$. For a set  $ D\subseteq \mathbb{R}^n $,  $ \mbox{Proj}_{D}0 $ denotes the set of points in $ D $ that are closest to the origin, where $ \mbox{Proj}$ is the projection operator. If $ D $ is a closed convex set, $ \mbox{Proj}_{D}0 $ represents the one with minimal norm in $ D $.

Let $h:\mathbb{R}^n\to \mathbb{R}$ be a continuously differentiable convex  function. We say that $ h $ is an $ \epsilon $-strong  convex function with a strong convexity parameter $ \epsilon\geq 0 $ iff $h-\frac{\epsilon}{2}\|\cdot\|^2  $ is a convex function. Clearly,  
   \begin{equation}\label{strong}
   \left\langle \nabla h(x_1),x_2-x_1 \right\rangle \leq h(x_2)-h(x_1)-\frac{\epsilon}{2}\|x_1-x_2\|^2, ~~~ \forall x_1, x_2\in \mathbb{R}^n.
   \end{equation}

Now, consider the saddle point problem  (\ref{PD}). A pair $ (x^*,y^*) \in \mathbb{R}^n\times \mathbb{R}^m$ is said to be a saddle point of the  Lagrangian function $ \mathcal{L} $ iff
 \begin{equation}\label{Saddlepoint}
 \mathcal{L}(x^*,y)\leq \mathcal{L}(x^*,y^*) \leq \mathcal{L}(x,y^*),~~\forall (x,y)\in\mathbb{R}^n\times\mathbb{R}^m.
 \end{equation}
In the sequel,   the set of saddle points of $\mathcal{L}$ is denoted by  $ \mathrm{\Omega} $.  Moreover, we assume that $\mathrm{\Omega}\not=\emptyset$.

For $ c>0 $ and $ p>0 $, associated with the Lagrangian function $ \mathcal{L} $, we introduce   an  augmented  Lagrangian function $ \mathcal{L}_t: \mathbb{R}^n \times \mathbb{R}^m \to \mathbb{R}$ defined as
\begin{equation}\label{asd}
\begin{split}
\mathcal{L}_t(x,y)&:= \mathcal{L}(x,y)  + \frac{c}{2t^p}\left(\|x\|^2-\|y\|^2\right)\\
&=f(x)+\left \langle Kx,y \right \rangle-g(y)+\frac{c}{2t^p}\left(\|x\|^2-\|y\|^2\right). 
\end{split}
\end{equation}
Obviously, $ \mathcal{L}_t(\cdot,y)$ is $ \frac{c}{t^p} $-strong convex for any $ y\in \mathbb{R}^m $, and $ \mathcal{L}_t(x,\cdot)$ is $ \frac{c}{t^p} $-strong concave for any $ x\in \mathbb{R}^n $. This means that $ \mathcal{L}_t$ admits a unique  saddle point $ (x_t,y_t) \in \mathbb{R}^n \times \mathbb{R}^m$, i.e.,
 \begin{equation}\label{Saddlepoint-t}
 \mathcal{L}_t(x_t,y)\leq \mathcal{L}_t(x_t,y_t) \leq \mathcal{L}_t(x,y_t),~~\forall (x,y)\in\mathbb{R}^n\times\mathbb{R}^m.
 \end{equation}
 Naturally, the system of primal-dual optimality conditions reads
 \begin{equation}\label{KKT}
 \begin{cases}
 0=\nabla_x\mathcal{L}_t(x_t,y_t) =\nabla  f(x_t)+\frac{c}{t^p}x_t+K^*y_t,\\
 0=\nabla_y\mathcal{L}_t(x_t,y_t)=-\nabla g(y_t)-\frac{c}{t^p}y_t+Kx_t.
 \end{cases}
 \end{equation}
Here, $\nabla _x\mathcal{L}_t$ and $\nabla _y\mathcal{L}_t$ denote the gradients of $\mathcal{L}_t$ with respect to the first argument and  the second argument, respectively.

The following important property will be used in the sequel.
\begin{lemma}\textup{\cite[Lemma 2.3]{Chbani2024O}}   \label{xtyt}
Let $ (\bar{x}^*,\bar{y}^*) \in\mathrm{Proj}_{\Omega}0 $ and let $(x_t,y_t) $ be the saddle point of $ \mathcal{L}_t $. Suppose $ 0<p<1 $. Then,
\begin{itemize}
\item[{\rm (i)}]   $ \lim_{t\to +\infty} \|(x_t,y_t)-(\bar{x}^*,\bar{y}^*)\| =0$ and $ \|(x_t,y_t)\|\leq\|(\bar{x}^*,\bar{y}^*)\| $, for all $ t $ $\geq$ $ t_0 $.
\item[{\rm (ii)}]   $ \|(\dot{x}_t,\dot{y}_t)\|\leq \frac{p}{t}\|(x_t,y_t) \|\leq \frac{p}{t}\|(\bar{x}^*,\bar{y}^*) \| $, for all $ t $ $\geq$ $ t_0 $.
\end{itemize}
\end{lemma}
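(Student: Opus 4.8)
The plan is to establish the three claims in sequence, exploiting throughout that $(x_t,y_t)$ is the \emph{unique} saddle point of the strongly convex--concave function $\mathcal{L}_t$, whereas $(\bar{x}^*,\bar{y}^*)$ is the minimal-norm element of $\Omega$. Write $\varepsilon:=\frac{c}{t^p}$ for the regularization strength, so that $\mathcal{L}_t(x,y)=\mathcal{L}(x,y)+\frac{\varepsilon}{2}\left(\|x\|^2-\|y\|^2\right)$. I would first prove the norm bound. Feeding $x=\bar{x}^*$ and $y=\bar{y}^*$ into the two saddle inequalities (\ref{Saddlepoint-t}) for $(x_t,y_t)$ gives
\[
\mathcal{L}_t(x_t,\bar{y}^*)\le \mathcal{L}_t(x_t,y_t)\le \mathcal{L}_t(\bar{x}^*,y_t),
\]
and expanding the quadratic terms turns this into
\[
\mathcal{L}(x_t,\bar{y}^*)-\mathcal{L}(\bar{x}^*,y_t)\le \tfrac{\varepsilon}{2}\left(\|\bar{x}^*\|^2+\|\bar{y}^*\|^2-\|x_t\|^2-\|y_t\|^2\right).
\]
The key observation is that the left-hand side is nonnegative: since $(\bar{x}^*,\bar{y}^*)\in\Omega$, the saddle inequalities (\ref{Saddlepoint}) yield $\mathcal{L}(\bar{x}^*,y_t)\le \mathcal{L}(\bar{x}^*,\bar{y}^*)\le \mathcal{L}(x_t,\bar{y}^*)$. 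Hence the right-hand side is nonnegative too, which is precisely $\|(x_t,y_t)\|^2\le\|(\bar{x}^*,\bar{y}^*)\|^2$.

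Next, for the convergence in (i), I would use a compactness argument. The bound just obtained shows $\{(x_t,y_t)\}$ is bounded, so any sequence $t_k\to+\infty$ has a subsequence along which $(x_{t_k},y_{t_k})\to(\hat{x},\hat{y})$. Passing to the limit in the optimality system (\ref{KKT}) --- using $\varepsilon=\frac{c}{t^p}\to 0$ and the continuity of $\nabla f,\nabla g$ --- gives $\nabla f(\hat{x})+K^*\hat{y}=0$ and $K\hat{x}-\nabla g(\hat{y})=0$, i.e. $(\hat{x},\hat{y})\in\Omega$. Continuity of the norm together with the bound gives $\|(\hat{x},\hat{y})\|\le\|(\bar{x}^*,\bar{y}^*)\|$; since $(\bar{x}^*,\bar{y}^*)$ is the unique minimal-norm point of the closed convex set $\Omega$, this forces $(\hat{x},\hat{y})=(\bar{x}^*,\bar{y}^*)$. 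As every subsequential limit coincides with $(\bar{x}^*,\bar{y}^*)$ and the trajectory is bounded, $(x_t,y_t)\to(\bar{x}^*,\bar{y}^*)$.

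Finally, for the velocity estimate (ii), I would differentiate (\ref{KKT}) in $t$, justifying the differentiability of $t\mapsto(x_t,y_t)$ by the implicit function theorem (the relevant Jacobian being invertible thanks to strong monotonicity). Writing $\dot{\varepsilon}=-\frac{p}{t}\varepsilon$, this yields
\[
\left(\nabla^2 f(x_t)+\varepsilon I\right)\dot{x}_t+K^*\dot{y}_t=\tfrac{p}{t}\varepsilon\, x_t,\qquad K\dot{x}_t-\left(\nabla^2 g(y_t)+\varepsilon I\right)\dot{y}_t=-\tfrac{p}{t}\varepsilon\, y_t.
\]
Taking the inner product of the first equation with $\dot{x}_t$, of the second with $\dot{y}_t$, and subtracting, the coupling terms $\langle K^*\dot{y}_t,\dot{x}_t\rangle$ and $\langle K\dot{x}_t,\dot{y}_t\rangle$ cancel by skew-symmetry, leaving
\[
\langle(\nabla^2 f(x_t)+\varepsilon I)\dot{x}_t,\dot{x}_t\rangle+\langle(\nabla^2 g(y_t)+\varepsilon I)\dot{y}_t,\dot{y}_t\rangle=\tfrac{p}{t}\varepsilon\,\langle(x_t,y_t),(\dot{x}_t,\dot{y}_t)\rangle.
\]
Bounding the left side below by $\varepsilon\|(\dot{x}_t,\dot{y}_t)\|^2$ (using $\nabla^2 f,\nabla^2 g\succeq 0$) and the right side above by Cauchy--Schwarz, then dividing by $\varepsilon\|(\dot{x}_t,\dot{y}_t)\|$, gives $\|(\dot{x}_t,\dot{y}_t)\|\le\frac{p}{t}\|(x_t,y_t)\|$; combining with the norm bound completes (ii).

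I expect the main subtlety to be the rigorous justification that $t\mapsto(x_t,y_t)$ is differentiable and that the formal differentiation of (\ref{KKT}) is legitimate. Everything else reduces to the two clean ideas above: combining the saddle inequalities (\ref{Saddlepoint-t}) and (\ref{Saddlepoint}) so that the Lagrangian gap serves as the nonnegative quantity forcing the norm bound, and the cancellation of the skew-symmetric coupling $K$ in the differentiated system, which leaves only positive-semidefinite Hessian contributions amenable to Cauchy--Schwarz.
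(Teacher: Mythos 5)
Your proposal is correct, and it is worth noting at the outset that the paper itself offers no proof of this lemma: it is imported verbatim from \cite[Lemma 2.3]{Chbani2024O}, so there is no in-paper argument to compare against. Judged on its own, your three-step strategy is sound and is the standard route for Tikhonov-path lemmas of this type: (a) sandwiching $\mathcal{L}_t(x_t,\bar{y}^*)\le\mathcal{L}_t(x_t,y_t)\le\mathcal{L}_t(\bar{x}^*,y_t)$ and using the nonnegativity of $\mathcal{L}(x_t,\bar{y}^*)-\mathcal{L}(\bar{x}^*,y_t)$ (from (\ref{Saddlepoint})) to extract $\|(x_t,y_t)\|\le\|(\bar{x}^*,\bar{y}^*)\|$; (b) boundedness plus passing to the limit in (\ref{KKT}) and the uniqueness of the minimal-norm element of the closed convex set $\Omega$ to upgrade subsequential limits to full convergence; (c) differentiating (\ref{KKT}), cancelling the skew-symmetric coupling, and applying Cauchy--Schwarz against the positive-semidefinite Hessian blocks. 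Two small points you correctly flag but should make explicit in a written version: the implicit function theorem applies because $f,g$ are $C^2$ and the Jacobian $\begin{pmatrix}\nabla^2 f(x_t)+\varepsilon I & K^*\\ -K & \nabla^2 g(y_t)+\varepsilon I\end{pmatrix}$ has symmetric part bounded below by $\varepsilon I$, hence is invertible; and the division by $\|(\dot{x}_t,\dot{y}_t)\|$ in step (c) needs the trivial caveat that the bound holds automatically when this norm vanishes. Note also that your argument only uses $p>0$; the hypothesis $p<1$ is not needed for the three claims as stated, so your proof is in fact slightly more general than the quoted lemma.
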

\begin{remark}
Let $ z_t=(x_t,y_t) $ and $ \bar{z}^*=(\bar{x}^*,\bar{y}^*) $. By Lemma \ref{xtyt}, it is easy to see that
\begin{equation}\label{z1}
\max\left\{\|x_t\|^2,\|y_t\|^2\right\}\leq \|\bar{z}^*\|^2 
~~\mbox{and}~~
\max\left\{\|\dot{x}_t\|^2,\|\dot{y}_t\|^2\right\}\leq \|\dot{z}_t\|^2\leq \frac{p^2}{t^2} \|\bar{z}^*\|^2.
\end{equation}
\end{remark}

\section{Fast convergence of the values}
In this section, by using the tools and techniques provided by the Lyapunov analysis, we  establish  the fast convergence rate  of  primal-dual gap along the trajectory generated by the dynamical system \textup{(\ref{dyn})}. Moreover, we also give some integral estimates.

\begin{theorem}\label{Th1}
Let $ (x ,y ):\left[t_0, +\infty\right)\to\mathbb{R}^n \times \mathbb{R}^m  $ be a global solution of the dynamical system $ (\ref{dyn}) $. Suppose that $ s>\max\{0, p-2\} $ and $p-q-s-1>0 $.
Then, for any $ (x^*,y^*)\in \Omega $, the trajectory $ (x(t),y(t))_{t\geq t_0} $ is bounded and
\begin{eqnarray*}
\mathcal{L}(x(t),y^*)-\mathcal{L}(x^*,y(t))=\mathcal{O}\left(\frac{1}{t^{2q+s}}\right),~~~as ~  t\to +\infty ,~~~~~~~~~~\\
\|\dot{x}(t)+\gamma \nabla_{x}\mathcal{L}_t\left(x(t),y(t)+\theta(t)\dot{y}(t)\right)\|=\mathcal{O}\left( \frac{1}{t^q}\right),~~~as ~  t\to +\infty ,~~~~~~~~~~\\
\|\dot{y}(t)-\gamma\nabla_{y}\mathcal{L}_t\left(x(t)+\theta(t)\dot{x}(t),y(t)\right)\|=\mathcal{O}\left( \frac{1}{t^q}\right),~~~as ~  t\to +\infty ,~~~~~~~~~~\\
\int_{t_0}^{+\infty}t^q\left( \|\dot{x}(t)\|^2+   \|\dot{y}(t)\|^2        \right)dt< +\infty,~~~~~~~~~~~~~~~~~~~ \\
\int_{t_0}^{+\infty} t^{2q+s}\left(\|\nabla_x\mathcal{L}_t(x(t),y(t)+\theta(t)\dot{y}(t))\|^2+\|\nabla_y\mathcal{L}_t(x(t)+\theta(t)\dot{x}(t),y(t))\|^2\right)dt  < +\infty.
\end{eqnarray*}
\end{theorem}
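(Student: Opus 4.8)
The plan is to run a Lyapunov analysis adapted to the augmented saddle-point structure, exploiting two simplifications that tame the Hessian-driven damping and the extrapolation. First, since $\mathcal{L}_t$ is separable in $f,g$ apart from the bilinear coupling $\langle Kx,y\rangle$ and the quadratic Tikhonov term, the extrapolated gradients split off linearly,
\[
\nabla_{x}\mathcal{L}_t(x,y+\theta\dot y)=\nabla_{x}\mathcal{L}_t(x,y)+\theta K^{*}\dot y,\qquad \nabla_{y}\mathcal{L}_t(x+\theta\dot x,y)=\nabla_{y}\mathcal{L}_t(x,y)+\theta K\dot x,
\]
so the extrapolation only contributes linear terms that can be absorbed. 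Second, introducing the momentum variables
\[
v_x:=\dot x+\gamma\nabla_{x}\mathcal{L}_t(x,y+\theta\dot y),\qquad v_y:=\dot y-\gamma\nabla_{y}\mathcal{L}_t(x+\theta\dot x,y)
\]
— precisely the quantities appearing in the two $\mathcal{O}(1/t^{q})$ estimates — turns (\ref{dyn}) into a first-order system in $(x,y,v_x,v_y)$, because $\ddot x+\gamma\frac{d}{dt}\nabla_{x}\mathcal{L}_t=\dot v_x$ and likewise for $y$. This removes the troublesome $\frac{d}{dt}\nabla\mathcal{L}_t$ terms.

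I would then construct an energy of the form
\[
\mathcal{E}(t)=t^{2q+s}\big(\mathcal{L}_t(x,y^{*})-\mathcal{L}_t(x^{*},y)\big)+\tfrac12\|a(t)(x-x^{*})+b(t)v_x\|^{2}+\tfrac12\|a(t)(y-y^{*})+b(t)v_y\|^{2}+R(t),
\]
where $a(t),b(t)$ are scalar weights to be fixed and $R(t)$ collects lower-order Tikhonov corrections. Using the saddle-point inequality for $(x^{*},y^{*})\in\Omega$ together with strong convexity/concavity of $\mathcal{L}_t(\cdot,y^{*})$ and $\mathcal{L}_t(x^{*},\cdot)$ (parameter $c/t^{p}$, via (\ref{strong})) and the identities $\nabla_{x}\mathcal{L}_t(x^{*},y^{*})=\frac{c}{t^{p}}x^{*}$, $\nabla_{y}\mathcal{L}_t(x^{*},y^{*})=-\frac{c}{t^{p}}y^{*}$, one checks that $\mathcal{E}$ is bounded below up to an $\mathcal{O}(t^{-p})$ term. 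Differentiating $\mathcal{E}$, substituting the first-order system, and invoking convexity then produces the dissipation; the bilinear coupling cancels because $K$ and $K^{*}$ are adjoint, which is the mechanism peculiar to the primal-dual (rather than pure minimization) setting. The specific rational form of $\theta(t)$ in (\ref{theta}) is engineered so that the remaining non-integrable cross terms vanish identically, and this is what pins down $a(t)$ and $b(t)$, with $b(t)\sim t^{q}$.

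The goal of the computation is a differential inequality $\dot{\mathcal{E}}(t)\le-\kappa\,t^{q}\big(\|\dot x\|^{2}+\|\dot y\|^{2}\big)-\kappa'\,t^{2q+s}\big(\|\nabla_{x}\mathcal{L}_t(x,y+\theta\dot y)\|^{2}+\|\nabla_{y}\mathcal{L}_t(x+\theta\dot x,y)\|^{2}\big)+r(t)$ for positive constants $\kappa,\kappa'$, where $r(t)\ge0$ gathers the Tikhonov errors and is integrable under the hypotheses; note the two dissipation terms are exactly the integrands of the two integral estimates. From this, boundedness of $\mathcal{E}$ yields boundedness of the trajectory and of the anchor terms; since $x,y$ are bounded and $b(t)\sim t^{q}$, the anchor bound gives $\|v_x\|,\|v_y\|=\mathcal{O}(1/t^{q})$, while the scaled-gap term gives $\mathcal{L}_t(x,y^{*})-\mathcal{L}_t(x^{*},y)=\mathcal{O}(1/t^{2q+s})$. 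Integrating the inequality over $[t_0,\infty)$ produces the two integral estimates. Finally, transferring the augmented gap to the plain one via $\mathcal{L}(x,y^{*})-\mathcal{L}(x^{*},y)=\big(\mathcal{L}_t(x,y^{*})-\mathcal{L}_t(x^{*},y)\big)-\frac{c}{2t^{p}}\big(\|x\|^{2}+\|y\|^{2}-\|x^{*}\|^{2}-\|y^{*}\|^{2}\big)$ and using $p>2q+s$ (a consequence of $p>q+s+1$ together with $q<1$) gives the stated $\mathcal{O}(1/t^{2q+s})$ rate for the primal-dual gap.

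The main obstacle is the coefficient bookkeeping: matching $a(t),b(t)$ to the precise $\theta(t)$ so that the $\mathcal{O}(t^{\cdot})$ cross terms cancel exactly rather than merely being dominated, and then verifying that the remainder $r(t)$ is integrable. The dangerous contributions scale like $t^{2q+s-p-1}\|(x,y)\|^{2}$ (arising from $\partial_t\mathcal{L}_t=-\frac{cp}{2t^{p+1}}(\|x\|^{2}-\|y\|^{2})$) and like the products of the weight derivatives with the $c/t^{p}$ strong-convexity terms; their finiteness is exactly where $p-q-s-1>0$ and $s>\max\{0,p-2\}$ enter, the latter ensuring $p<s+2$ so that the admissible window $q+s+1<p<s+2$ is nonempty (consistent with the standing assumption $q<1$). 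A secondary delicacy is that strong convexity must be applied at the fixed $\mathcal{L}$-saddle $(x^{*},y^{*})$ rather than at the $\mathcal{L}_t$-saddle $(x_t,y_t)$, so the residual gradients $\nabla\mathcal{L}_t(x^{*},y^{*})=\mathcal{O}(t^{-p})$ have to be carried along and shown not to degrade the rate.
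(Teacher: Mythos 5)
Your proposal follows essentially the same route as the paper's proof: the paper's Lyapunov function is exactly of your form, with anchor $(\alpha-1)(x(t)-x^*)+t^q v_x$ (so $a\equiv\alpha-1$, $b(t)=t^q$), the same adjointness/extrapolation cancellation of the bilinear coupling, the same integrable source term of order $t^{q+s-p}$ controlled by $p-q-s-1>0$ (with $s>p-2$ absorbing the $t^{q-2}$ remainder), and the same reading-off of all five conclusions from boundedness and integration of the dissipation inequality. The one calibration you leave implicit is that the weight on the gap must be $t^{2q+s}-2\gamma q t^{2q-1}+\gamma t^q$ rather than plain $t^{2q+s}$, together with the correction term $\tfrac{\alpha-1}{2}(1-qt^{q-1})\|\cdot-\cdot^*\|^2$, so that the $\langle\nabla_x\mathcal{L}_t,\dot x\rangle$ and $\langle x-x^*,\dot x\rangle$ cross terms cancel exactly --- precisely the bookkeeping you flag as the main obstacle.
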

\begin{proof}

 For any fixed $ (x^*,y^*)\in \mathrm{\Omega} $, we introduce the energy function $ \mathcal{E}:\left[t_0,+\infty \right) \to \mathbb{R} $ as follows:
\begin{equation}\label{def1}
\mathcal{E}(t)=\mathcal{E}_1(t)+\mathcal{E}_2(t)+\mathcal{E}_3(t),
\end{equation}
where
\begin{equation*}
\small{\begin{cases}
\mathcal{E}_1(t)=&\left(t^{2q+s}-2\gamma q t^{2q-1}+\gamma t^q\right)
\left(\mathcal{L}(x(t),y^*)-\mathcal{L}(x^*,y(t))+\frac{c}{2t^p}(\|x(t)\|^2+\|y(t)\|^2)\right),\\
\mathcal{E}_2(t)=&\frac{1}{2}\lVert(\alpha-1)(x(t)-x^*)+t^q \big(\dot{x}(t)+\gamma \nabla_{x}\mathcal{L}_t\left(x(t),y(t)+\theta(t)\dot{y}(t)\right)\big)\rVert^2\\
                 &+\frac{\alpha-1}{2}(1-qt^{q-1})\|x(t)-x^*\|^2,\\
\mathcal{E}_3(t)=&\frac{1}{2}\|(\alpha-1)(y(t)-y^*)+t^q \big(\dot{y}(t)-\gamma\nabla_{y}\mathcal{L}_t\left(x(t)+\theta(t)\dot{x}(t),y(t)\right)\big)\|^2\\
                 &+\frac{\alpha-1}{2}(1-qt^{q-1})\|y(t)-y^*\|^2.
\end{cases}}
\end{equation*}
By $ 0<q<1 $ and $ s>0 $, there exists $ t_1\geq t_0 $ such that $t^{2q+s}-2\gamma q t^{2q-1}+\gamma t^q\geq0$ and $ 1-qt^{q-1}\geq0 $,  for all $ t\geq t_1 $. Thus, $ \mathcal{E}(t) \geq0  $, for all $ t\geq t_1 $.

   We now analyze the time derivative of $ \mathcal{E}(t) $.
   
    Clearly,
   \begin{equation}\label{5}\small{
\begin{split}
\dot{\mathcal{E}}_1(t)
=&\big((2q+s)t^{2q+s-1}-2\gamma q(2q-1)t^{2q-2}+\gamma qt^{q-1}\big)\Big(\mathcal{L}(x(t),y^*)-\mathcal{L}(x^*,y(t))\\
&+\frac{c}{2t^p} ( \| x(t) \|^2+ \| y(t) \|^2)\Big)+\left(t^{2q+s}-2\gamma q t^{2q-1}+\gamma t^q\right) \Big(\left \langle \nabla_x\mathcal{L}(x(t), y^*),\dot{x}(t)\right \rangle \\
&-\left \langle \nabla_y\mathcal{L}(x^*, y(t)),\dot{y}(t)\right \rangle-\frac{cp}{2t^{p+1}} \left( \| x(t) \|^2 +\|y(t)\|^2\right)\\ &+\frac{c}{t^p}(\left \langle x(t),\dot{x}(t)\right \rangle+\left \langle y(t),\dot{y}(t)\right \rangle)\Big).
\end{split}}
\end{equation}

Now, set $ \mu(t):=(\alpha-1)(x(t)-x^*)+t^q \Big(\dot{x}(t)+\gamma \nabla_{x}\mathcal{L}_t\big(x(t),y(t)+\theta(t)\dot{y}(t)\big)\Big). $
Then,
\begin{align*}
\dot{\mu}(t)=&(\alpha-1)\dot{x}(t)+qt^{q-1}\Big(\dot{x}(t)+\gamma \nabla_{x}\mathcal{L}_t\big(x(t),y(t)+\theta(t)\dot{y}(t)\big)\Big)\\
&+t^q\Big(\ddot{x}(t)+\gamma\frac{d}{dt} \nabla_{x}\mathcal{L}_t\big(x(t),y(t)+\theta(t)\dot{y}(t)\big)\Big).
\end{align*}
This, together with the first equality of   (\ref{dyn}), follows that
\begin{equation}\label{dot-mu}
\dot{\mu}(t)=(qt^{q-1}-1)\dot{x}(t)+(\gamma qt^{q-1}-t^{q+s})\nabla_{x}\mathcal{L}_t\big(x(t),y(t)+\theta (t)\dot{y}(t)\big).
\end{equation}
Note that
$$
\nabla_{x}\mathcal{L}_t\left(x(t),y(t)+\theta(t)\dot{y}(t)\right)=\nabla_{x}\mathcal{L}(x(t),y^*)+K^*\left(y(t)-y^*+\theta(t)\dot{y}(t)\right)+\frac{c}{t^p}x(t).
$$
Therefore,
 \begin{equation}\label{mu000}
\begin{split}
\left \langle \mu(t),\dot{\mu}(t)\right \rangle=&(\alpha-1)(qt^{q-1}-1)\left \langle x(t)-x^*,\dot{x}(t)\right \rangle+t^q(qt^{q-1}-1)\| \dot{x}(t)\|^2 \\
&+(\alpha-1)(\gamma qt^{q-1}-t^{q+s})\left \langle\nabla_x\mathcal{L}(x(t),y^*)+\frac{c}{t^p}x(t),x(t)-x^*\right \rangle \\
&+(\alpha-1)(\gamma qt^{q-1}-t^{q+s})\left \langle K^*(y(t)-y^*+\theta(t)\dot{y}(t)),x(t)-x^*\right \rangle \\
&-(t^{2q+s}-2\gamma qt^{2q-1}+\gamma t^q) \left \langle\nabla_x\mathcal{L}(x(t),y^*)+\frac{c}{t^p}x(t),\dot{x}(t)\right \rangle \\
&-(t^{2q+s}-2\gamma qt^{2q-1}+\gamma t^q)\left \langle K^*(y(t)-y^*+\theta(t)\dot{y}(t)),\dot{x}(t)\right \rangle \\
&+\gamma t^q(\gamma qt^{q-1}-t^{q+s})\|\nabla_x\mathcal{L}_t(x(t),y(t)+\theta(t)\dot{y}(t))\|^2. 
\end{split}
\end{equation}
Since $\mathcal{L}(\cdot,y^*)+\frac{c}{2t^p}\|\cdot\|^2 $ is an $\epsilon$-strong convex function with  strong convexity parameter $\epsilon=\frac{c}{t^p}$, it follows from $(\ref{strong}) $ that
\begin{equation}\label{stre}
\begin{aligned}
&\left \langle\nabla_x\mathcal{L}(x(t),y^*)+\frac{c}{t^p}x(t),x^*- {x}(t)\right \rangle \\ \leq&\mathcal{L}(x^*,y^*)-\mathcal{L}(x(t),y^*)+\frac{c}{2t^p}\left(\|x^*\|^2- \|x(t)\|^2- \|x^*-x(t)\|^2\right).
\end{aligned}
\end{equation}
Note that $ t_0\textgreater\sqrt[s+1]{\gamma q} $, which implies
\begin{equation}\label{stresub}
\gamma qt^{q-1}-t^{q+s}< 0,~~~ \forall  t\geq t_0.
\end{equation}
Combining $(\ref{mu000}) $,  $(\ref{stre}) $ and $(\ref{stresub}) $,  we have 
\begin{equation}\label{22-22}\resizebox{0.925\hsize}{!}{$
\begin{split}
&\left \langle \mu(t),\dot{\mu}(t)\right \rangle\\
\leq&(\alpha-1)(qt^{q-1}-1)\left \langle x(t)-x^*,\dot{x}(t)\right \rangle+t^q(qt^{q-1}-1)\| \dot{x}(t)\|^2 \\
&+(\alpha-1)(\gamma qt^{q-1}-t^{q+s})\Big(\mathcal{L}(x(t),y^*)-\mathcal{L}(x^*,y^*)+\frac{c}{2t^p}\Big(\|x(t)\|^2-\|x^*\|^2 \\
&+\|x(t)-x^*\|^2\Big)\Big) +(\alpha-1)(\gamma qt^{q-1}-t^{q+s})\left \langle K^*(y(t)-y^*+\theta(t)\dot{y}(t)),x(t)-x^*\right \rangle\\
&-(t^{2q+s}-2\gamma qt^{2q-1}+\gamma t^q)\left \langle\nabla_x\mathcal{L}(x(t),y^*)+\frac{c}{t^p}x(t),\dot{x}(t)\right \rangle\\
&-(t^{2q+s}-2\gamma qt^{2q-1}+\gamma t^q)\left \langle K^*(y(t)-y^*+\theta(t)\dot{y}(t)),\dot{x}(t)\right \rangle\\
&+\gamma t^q(\gamma qt^{q-1}-t^{q+s})\|\nabla_x\mathcal{L}_t(x(t),y(t)+\theta(t)\dot{y}(t))\|^2,    \end{split}  $}
\end{equation}
  for all $t\geq t_1$. Moreover,
\begin{equation}\label{e2d}
\begin{aligned}
\frac{d}{dt}\Big( \frac{\alpha-1}{2}(1-qt^{q-1})\|x(t)-x^*\|^2 \Big)
=&\frac{(\alpha-1)q(1-q)}{2}t^{q-2}\|x(t)-x^*\|^2\\
&+(\alpha-1)(1-qt^{q-1})\left \langle x(t)-x^*,\dot{x}(t)\right \rangle.
\end{aligned}
\end{equation}
By (\ref{22-22}) and (\ref{e2d}),    we obtain 
 \begin{equation}\label{6}
\begin{split}
\dot{\mathcal{E}}_2(t)=&\left \langle \mu(t),\dot{\mu}(t)\right \rangle +\frac{d}{dt}\Big( \frac{\alpha-1}{2}(1-qt^{q-1})\|x(t)-x^*\|^2 \Big)\\
\leq&t^q(qt^{q-1}-1)\| \dot{x}(t)\|^2+(\alpha-1)(\gamma qt^{q-1}-t^{q+s})\Big(\mathcal{L}(x(t),y^*)\\
&-\mathcal{L}(x^*,y^*)+\frac{c}{2t^p} \left(\|x(t)\|^2-\|x^*\|^2\right)\Big)+\Big(\frac{(\alpha-1)q(1-q)}{2}t^{q-2}\\
&+\frac{(\alpha-1)c}{2}(\gamma qt^{q-1-p}-t^{q+s-p})\Big)         \|x(t)-x^*\|^2   \\
&+(\alpha-1)(\gamma qt^{q-1}-t^{q+s})\left \langle K^*(y(t)-y^*+\theta(t)\dot{y}(t)),x(t)-x^*\right \rangle\\
&-(t^{2q+s}-2\gamma qt^{2q-1}+\gamma t^q)\left \langle\nabla_x\mathcal{L}(x(t),y^*)+\frac{c}{t^p}x(t),\dot{x}(t)\right \rangle\\
&-(t^{2q+s}-2\gamma qt^{2q-1}+\gamma t^q)\left \langle K^*(y(t)-y^*+\theta(t)\dot{y}(t)),\dot{x}(t)\right \rangle\\
&+\gamma t^q(\gamma qt^{q-1}-t^{q+s})\|\nabla_x\mathcal{L}_t(x(t),y(t)+\theta(t)\dot{y}(t))\|^2,
\end{split}
\end{equation}
 for all $t\geq t_1$.

 Similarly, for all $t\geq t_1$,
\begin{equation}\label{7}
\begin{split}
\dot{\mathcal{E}}_3(t)\leq &t^q(qt^{q-1}-1)\| \dot{y}(t)\|^2+(\alpha-1)(\gamma qt^{q-1}-t^{q+s})\Big(\mathcal{L}(x^*,y^*)\\
&-\mathcal{L}(x^*,y(t))+\frac{c}{2t^p} \left(\|y(t)\|^2-\|y^*\|^2\right)\Big)+\Big(\frac{(\alpha-1)q(1-q)}{2}t^{q-2}\\
&+\frac{(\alpha-1)c}{2}(\gamma qt^{q-1-p}-t^{q+s-p})\Big)         \|y(t)-y^*\|^2   \\
&-(\alpha-1)(\gamma qt^{q-1}-t^{q+s})\left \langle K(x(t)-x^*+\theta(t)\dot{x}(t)),y(t)-y^*\right \rangle\\
&+(t^{2q+s}-2\gamma qt^{2q-1}+\gamma t^q) \left \langle\nabla_y\mathcal{L}(x^*,y(t))-\frac{c}{t^p}y(t),\dot{y}(t)\right \rangle  \\
&+(t^{2q+s}-2\gamma qt^{2q-1}+\gamma t^q) \left \langle K(x(t)-x^*+\theta(t)\dot{x}(t)),\dot{y}(t)\right \rangle\\
&+\gamma t^q(\gamma qt^{q-1}-t^{q+s})\|\nabla_y\mathcal{L}_t(x(t)+\theta(t)\dot{x}(t),y(t))\|^2.
\end{split}
\end{equation}
Note that, by  $(\ref{theta}) $, we have
\begin{align*}
&(\alpha-1)(\gamma qt^{q-1}-t^{q+s})\left \langle K^*(y(t)-y^*+\theta(t)\dot{y}(t)),x(t)-x^*\right \rangle-(t^{2q+s}-2\gamma qt^{2q-1}&\\
&+\gamma t^q)\left \langle K^*(y(t)-y^*+\theta(t)\dot{y}(t)),\dot{x}(t)\right \rangle-(\alpha-1)(\gamma qt^{q-1}-t^{q+s})\left \langle K(x(t)-x^*\right.&\\
& \left.+\theta(t)\dot{x}(t)),y(t)-y^*\right \rangle
+(t^{2q+s}-2\gamma qt^{2q-1}+\gamma t^q) \left \langle K(x(t)-x^*+\theta(t)\dot{x}(t)),\dot{y}(t)\right \rangle=0.&
\end{align*}
Therefore, combining   $(\ref{5})$, $(\ref{6})$ and $(\ref{7})$,  we derive that  
\begin{equation}\label{dt}\resizebox{0.925\hsize}{!}{$
\begin{aligned}
\dot{\mathcal{E}}(t)\leq&a(t)(\mathcal{L}(x(t),y^*)-\mathcal{L}(x^*,y(t)))+b(t)\left(\|x(t)\|^2+\|y(t)\|^2\right) \\
&+\frac{\alpha-1}{2}\left(q(1-q)t^{q-2}+c\gamma qt^{q-1-p}-ct^{q+s-p}      \right)        ( \|x(t)-x^*\|^2+\|y(t)-y^*\|^2)\\
&+\gamma t^q(\gamma qt^{q-1}-t^{q+s})\Delta(t)+t^q(qt^{q-1}-1)(\| \dot{x}(t)\|^2+\| \dot{y}(t)\|^2)\\
&+\frac{(\alpha-1)c}{2}(t^{q+s-p}-\gamma qt^{q-1-p} )\left(\|x^*\|^2+\|y^*\|^2\right),
\end{aligned}$}
\end{equation}
for all $t\geq t_1$, where
\begin{equation}\label{at}
a(t):= (2q+s)t^{2q+s-1}-2\gamma q(2q-1)t^{2q-2}+\gamma q t^{q-1}+(\alpha-1)(\gamma qt^{q-1}-t^{q+s}),
\end{equation}
\begin{equation*}
\begin{aligned}
b(t):=& \frac{c}{2}\Big((2q+s-p)t^{2q+s-p-1}-2\gamma q(2q-1-p)t^{2q-p-2}+\gamma(q-p)t^{q-p-1} \\
&~~~~~+(\alpha-1)(\gamma qt^{q-p-1}-t^{q+s-p})\Big),
\end{aligned}
\end{equation*}
and
\begin{equation}\label{Delta}
 \Delta(t):= \|\nabla_x\mathcal{L}_t(x(t),y(t)+\theta(t)\dot{y}(t))\|^2+\|\nabla_y\mathcal{L}_t(x(t)+\theta(t)\dot{x}(t),y(t))\|^2.
\end{equation}

Now, we analyze the coefficients on the right hand side of (\ref{dt}).

Firstly, from $ 0<q<1 $ and $ s>0 $, we have
\begin{equation}\label{coef0}
q+s>2q+s-1>q-1>2q-2 .
 \end{equation}
 This, together with $ \alpha>1 $, yields that there exists $ t_2\geq t_1 $ such that
 \begin{equation}\label{coe0}
  a(t)\leq 0 \mbox{ and } b(t)\leq 0, ~~\forall  t\geq t_2.
 \end{equation}
Secondly, by $ s>\max\{0, p-2\} $, there exists  $ t_3\geq t_2 $ such that
\begin{equation}\label{coe00}
\frac{\alpha-1}{2}\left(q(1-q)t^{q-2}+c\gamma qt^{q-1-p}-ct^{q+s-p}\right)\leq 0,~~~\forall~t\geq t_3.
\end{equation}
Thirdly, we can deduce that there exists $ t_4\geq t_3 $ such that
 \begin{equation}\label{coe000}
t^q(qt^{q-1}-1) \leq0, ~~\forall  t\geq t_4.
 \end{equation}
Finally, by (\ref{Saddlepoint}), we have $  \mathcal{L}(x(t),y^*)-\mathcal{L}(x^*,y(t))\geq0 $. This, together with $(\ref{stresub})$,  $(\ref{coe0})$, $(\ref{coe00})$ and $(\ref{coe000})$, follows that
\begin{equation*}\label{int}
\dot{\mathcal{E}}(t)\leq\frac{(\alpha-1)c}{2}t^{q+s-p}\left(\|x^*\|^2+\|y^*\|^2\right), ~~~\forall   t\geq   t_4.
\end{equation*}
Integrating  it from $ t_4 $ to $ t $, we obtain
\begin{equation*}
\mathcal{E}(t)\leq \mathcal{E}(t_4)+\int_{t_4}^{t} \frac{(\alpha-1)c}{2}\omega^{q+s-p} \left(\|x^*\|^2+\|y^*\|^2\right)d\omega, ~~~\forall~ t\geq t_4.
\end{equation*} Since  $p-q-s-1>0 $,    there exists   $ M_1\geq 0 $ such that
\begin{equation*}
  \mathcal{E}(t)   \leq M_1,~~~\forall~ t\geq t_4.
\end{equation*}
Taking  into account (\ref{def1}), we derive that  the trajectory $ (x(t),y(t))  $, $  \|(\alpha-1)(x(t)-x^*)+t^q(\dot{x}(t)+\gamma \nabla_{x}\mathcal{L}_t(x(t),y(t)+\theta(t)\dot{y}(t)))\|  $ and $\|(\alpha-1)(y(t)-y^*)+t^q \left(\dot{y}(t)-\gamma\nabla_{y}\mathcal{L}_t\left(x(t)+\theta(t)\dot{x}(t),y(t)\right)\right)\| $ are bounded for all $ t\geq t_4 $.
Note that
\begin{equation*}
\begin{aligned}
&\|t^q\left(\dot{x}(t)+\gamma \nabla_{x}\mathcal{L}_t\left(x(t),y(t)+\theta(t)\dot{y}(t)\right)\right)\|^2\\
\leq&2\|(\alpha-1)(x(t)-x^*)\|^2+ 2\|(\alpha-1)(x(t)-x^*)+\\
&~~~~~~~~~~~~~~~~~~~~~~~~~~~~~~~~t^q(\dot{x}(t)+\gamma \nabla_{x}\mathcal{L}_t\left(x(t),y(t)+\theta(t)\dot{y}(t)\right))\|^2.
\end{aligned}
\end{equation*}
Then, by $  \|(\alpha-1)(x(t)-x^*)+t^q\left(\dot{x}(t)+\gamma \nabla_{x}\mathcal{L}_t\left(x(t),y(t)+\theta(t)\dot{y}(t)\right)\right)\|  $ and the trajectory $ (x(t),y(t)) $ are  bounded for all $ t\geq t_4 $, we can deduce  that
\begin{equation*}
\|\dot{x}(t)+\gamma \nabla_{x}\mathcal{L}_t\left(x(t),y(t)+\theta(t)\dot{y}(t)\right)\|=\mathcal{O}\left( \frac{1}{t^q}\right), ~~~\mbox{as }~t  \to +\infty.
\end{equation*}
Similarly, we can show that
\begin{equation*}
\|\dot{y}(t)-\gamma\nabla_{y}\mathcal{L}_t\left(x(t)+\theta(t)\dot{x}(t),y(t)\right)\|=\mathcal{O}\left( \frac{1}{t^q}\right), ~~~\mbox{as }~t  \to +\infty.
\end{equation*}
Moreover, by (\ref{def1}), we have
\begin{equation*}\label{Corrolary}
\left(t^{2q+s}-2\gamma q t^{2q-1}+\gamma t^q\right)(\mathcal{L}(x(t),y^*)-\mathcal{L}(x^*,y(t))\leq \mathcal{E}(t) \leq M_1, ~~~\forall ~ t\geq t_4,
\end{equation*}
which implies
\begin{equation*}
\mathcal{L}(x(t),y^*)-\mathcal{L}(x^*,y(t))=\mathcal{O}\left( \frac{1}{t^{2q+s}}\right),~~~\mbox{as } ~ t\to +\infty.
\end{equation*}

On the other hand, it is clear  from (\ref{dt}) that
\begin{equation*}\label{r6}
\begin{aligned}
&\dot{\mathcal{E}}(t)+t^q(1-qt^{q-1})(\| \dot{x}(t)\|^2+\| \dot{y}(t)\|^2)
+\gamma t^q(t^{q+s}-\gamma qt^{q-1})\Delta(t)\\
\leq&\frac{(\alpha-1)c}{2}t^{q+s-p}\left(\|x^*\|^2+\|y^*\|^2\right), ~~~ \forall t\geq t_4 .
\end{aligned}
\end{equation*}
Since $0<q<1 $, there exists  $ t_5\geq t_4 $ such that $$ \frac{1}{2}t^q\leq t^q( 1-qt^{q-1})~\mbox{and}~\frac{1}{2}\gamma t^{2q+s}\leq\gamma t^q(t^{q+s}-\gamma qt^{q-1}),~~\forall~t\geq t_5.$$
 Then, for any  $t \geq   t_5 $,
\begin{equation*}
\dot{\mathcal{E}}(t)+\frac{1}{2}t^q(\| \dot{x}(t)\|^2+\| \dot{y}(t)\|^2)
+\frac{1}{2}\gamma t^{2q+s}\Delta(t)
\leq\frac{(\alpha-1)c}{2}t^{q+s-p} \left(\|x^*\|^2+\|y^*\|^2\right).
\end{equation*} Integrating it from $ t_5$ to $ t $, we have
\begin{equation*}
\begin{aligned}
&\mathcal{E}(t)+\int_{t_5}^{t} \frac{1}{2}\omega^q(\| \dot{x}(\omega)\|^2+\| \dot{y}(\omega)\|^2)d\omega+ \int_{t_5}^{t}\frac{1}{2}\gamma \omega^{2q+s}\Delta(\omega)d\omega\\
&~~~~~ \leq\mathcal{E}(t_5)+ \int_{t_5}^{t} \frac{(\alpha-1)c}{2}\omega^{q+s-p} \left(\|x^*\|^2+\|y^*\|^2\right)d\omega, ~\forall~ t\geq t_5.
\end{aligned}
\end{equation*}
According to $p-q-s-1>0 $ and  $  \mathcal{E}(t)\geq 0 $ for all $ t\geq t_5 $, we have
\begin{equation*}
\int_{t_0}^{+\infty}t^q\left( \|\dot{x}(t)\|^2+   \|\dot{y}(t)\|^2        \right)dt< +\infty
\mbox{ and }
\int_{t_0}^{+\infty}  t^{2q+s}\Delta(t)dt
  < +\infty.
\end{equation*}
The proof is complete.
\end{proof}
\begin{remark}
In \cite[Theorem 4]{L2023O}, L\'{a}szl\'{o} provided a Tikhonov regularized second-order dynamical system with slow vanishing damping for solving the problem (\ref{unconstrained}), and obtained that
\begin{equation*}
f(x(t))-\min f=\mathcal{O}\left(\frac{1}{t^{2q}}\right),~~~\mbox{ and }
\|\dot{x}(t)\|=\mathcal{O}\left( \frac{1}{t^q}\right),~~~as ~  t\to +\infty.
\end{equation*}
However, by taking $K\equiv0$, $g\equiv0$, $\gamma\equiv0$ and $\theta\equiv0$ in the primal-dual dynamical system (\ref{dyn}), it follows from Theorem \ref{Th1} that
\begin{equation*}
f(x(t))-\min f=\mathcal{O}\left(\frac{1}{t^{2q+s}}\right),~~~\mbox{ and }
\|\dot{x}(t)\|=\mathcal{O}\left( \frac{1}{t^q}\right),~~~as ~  t\to +\infty.
\end{equation*}
Therefore, Theorem \ref{Th1} can be viewed as an extension  of  \cite[Theorem 4]{L2023O}, which deals with the unconstrained convex optimization problem (\ref{unconstrained}), to the convex-concave bilinear saddle point problem (\ref{PD}).
\end{remark}

\section{Strong convergence of trajectory to the minimal norm solution}
In this section, by using  Tikhonov regularization techniques, we establish a simultaneous result concerning  both the convergence rate of the primal-dual gap and  the strong convergence of trajectory  generated by dynamical system (\ref{dyn}). In order to do this, we first state the following lemma, which gives the estimates on a newly defined energy function.
\begin{lemma}\label{lemma4.21}
 Let $ (x ,y ):\left[t_0, +\infty\right)\to\mathbb{R}^n \times \mathbb{R}^m  $ be a global solution of the dynamical system \textup{(\ref{dyn})} and let $ (x_t,y_t) $ be the saddle point of $ \mathcal{L}_t $. Consider the energy function $ \hat{\mathcal{E}}:\left[t_0,+\infty \right) \to \mathbb{R} $  defined as
 \begin{equation}\label{def}
 \hat{\mathcal{E}}(t)=\hat{\mathcal{E}}_1(t)+\hat{\mathcal{E}}_2(t)+\hat{\mathcal{E}}_3(t),
 \end{equation}
 where
  \begin{equation*}
 \left\{
 \begin{array}{rl}
 \hat{\mathcal{E}}_1(t)=&\left(t^{2q+s}-2\gamma q t^{2q-1}+\gamma t^q\right)
 \left(\mathcal{L}_t(x(t),y_t)-\mathcal{L}_t(x_t,y(t))\right),\\
 \hat{\mathcal{E}}_2(t)=&\frac{1}{2}\|(\alpha-1)(x(t)-x_t)+t^q \big(\dot{x}(t)+\gamma \nabla_{x}\mathcal{L}_t(x(t),y(t)+\theta(t)\dot{y}(t))\big)\|^2\\
 &+\frac{\alpha-1}{2}(1-qt^{q-1})\|x(t)-x_t\|^2,\\
 \hat{\mathcal{E}}_3(t)=&\frac{1}{2}\|(\alpha-1)(y(t)-y_t)+t^q \big(\dot{y}(t)-\gamma\nabla_{y}\mathcal{L}_t(x(t)+\theta(t)\dot{x}(t),y(t))\big)\|^2\\
 &+\frac{\alpha-1}{2}(1-qt^{q-1})\|y(t)-y_t\|^2.
 \end{array}\right.
 \end{equation*}
 Then, there exists $ t_1\geq t_0 $ such that 
 \begin{equation*}\resizebox{0.95\hsize}{!}{$
\begin{aligned}
&\frac{M}{t^q}\hat{\mathcal{E}}(t)+\dot{\hat{\mathcal{E}}}(t)\\
\leq&\Big(M\left(t^{q+s}-2\gamma q t^{q-1}+\gamma\right)+a(t) \Big)\left(\mathcal{L}_t(x(t),y_t)-\mathcal{L}(x_t,y(t))\right)\\
& -\frac{cp}{2}\left(t^{2q+s-p-1}-2\gamma q t^{2q-p-2}+\gamma t^{q-p-1}\right)\left((\|x(t)\|^2+\|y(t)\|^2)-(\|x_t\|^2+\|y_t\|^2)\right)\\
&+l(t)\left(\|x(t)-x_t\|^2+\|y(t)-y_t\|^2\right)
+\Big(\frac{3M-1}{2}t^{q}+qt^{2q-1}\Big)\left(\| \dot{x}(t)\|^2+\| \dot{y}(t)\|^2\right)\\
&+\Big(\frac{3M}{2}\gamma^2t^{q}+\gamma^2 q t^{2q-1}-\frac{1}{2}\gamma t^{2q+s}\Big)\Delta(t)\\
&+ \Big( \frac{2}{(\alpha-1)c}(\|K\|^2+\|K^*\|^2)\left(t^{3q+s+p}-2\gamma  qt^{3q-1+p}+\gamma t^{2q+p}\right)  \\ &+\frac{2(\alpha-1)\alpha}{c}\left(\alpha t^{p-s-q}-q t^{p-s-1}\right)  +   \frac{1}{2}(\alpha-1)^2t^{q}+   \frac{1}{2}\gamma(\alpha-1)^2  t^{-s}
\Big)  (\|\dot{x}_t\|^2+\|\dot{y}_t\|^2),
\end{aligned}$}
 \end{equation*}
 for all $ t\geq t_1 $. Here, $M$ is an arbitrarily positive constant,
  \begin{equation*}\resizebox{0.92\hsize}{!}{$
\begin{aligned}
l(t):=&\frac{M(\alpha-1)(3\alpha-2)}{2}t^{-q}+\frac{(\alpha-1)c\gamma}{8}t^{-p}-\frac{M(\alpha-1)q}{2}t^{-1}-\frac{(\alpha-1)c}{4}t^{q+s-p}\\
&+\frac{(\alpha-1)c\gamma q }{4}t^{q-1-p}
-\frac{(\alpha-1)qc}{8\alpha}t^{2q+s-1-p}+\frac{(\alpha-1)q(1-q)}{2}t^{q-2},
\end{aligned}$}
\end{equation*}
 $ a(t) $ and $ \Delta(t) $ are defined as in $ (\ref{at}) $ and $ (\ref{Delta}) $, respectively.
\end{lemma}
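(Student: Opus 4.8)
The strategy I would follow is to rerun the Lyapunov computation that produced the bound $(\ref{dt})$ in the proof of Theorem~\ref{Th1}, but with two modifications: the energy is now centered at the \emph{time-dependent} saddle point $(x_t,y_t)$ of $\mathcal{L}_t$ rather than at a fixed $(x^*,y^*)\in\Omega$, and I track the combined quantity $\frac{M}{t^q}\hat{\mathcal{E}}(t)+\dot{\hat{\mathcal{E}}}(t)$, the extra multiple of $\hat{\mathcal{E}}$ being what will later drive the exponential-type decay needed for strong convergence. First I would fix $t_1\ge t_0$ large enough that the sign conditions $t^{2q+s}-2\gamma q t^{2q-1}+\gamma t^q\ge0$, $1-qt^{q-1}\ge0$ and $\gamma q t^{q-1}-t^{q+s}<0$ all hold for $t\ge t_1$ (the last already from $(\ref{stresub})$), so that $\hat{\mathcal{E}}\ge0$ and the coefficient signs are pinned down. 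Then I would differentiate $\hat{\mathcal{E}}_1,\hat{\mathcal{E}}_2,\hat{\mathcal{E}}_3$ separately.

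\textbf{Differentiating $\hat{\mathcal{E}}_1$.} The product rule gives the derivative of the coefficient $t^{2q+s}-2\gamma qt^{2q-1}+\gamma t^q$ times the augmented gap, plus that coefficient times $\frac{d}{dt}\bigl(\mathcal{L}_t(x(t),y_t)-\mathcal{L}_t(x_t,y(t))\bigr)$. Because $x_t,y_t$ depend on $t$, the chain rule produces velocity terms $\dot{x}_t,\dot{y}_t$, and this is precisely where the argument departs from Theorem~\ref{Th1}. The optimality system $(\ref{KKT})$ is the key simplification: from $\nabla_y\mathcal{L}_t(x_t,y_t)=0$ one gets $-\nabla g(y_t)-\frac{c}{t^p}y_t=-Kx_t$, so the $\dot{y}_t$-contribution collapses to $\langle K(x(t)-x_t),\dot{y}_t\rangle$, and symmetrically the $\dot{x}_t$-contribution collapses to $-\langle K^*(y(t)-y_t),\dot{x}_t\rangle$. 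The explicit $t$-dependence of the Tikhonov weight $\frac{c}{2t^p}$ inside $\mathcal{L}_t$ yields exactly the term $-\frac{cp}{2}\bigl(t^{2q+s-p-1}-2\gamma qt^{2q-p-2}+\gamma t^{q-p-1}\bigr)\bigl((\|x(t)\|^2+\|y(t)\|^2)-(\|x_t\|^2+\|y_t\|^2)\bigr)$ appearing in the statement.

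\textbf{Differentiating $\hat{\mathcal{E}}_2$ and $\hat{\mathcal{E}}_3$.} Setting $\hat{\mu}(t):=(\alpha-1)(x(t)-x_t)+t^q\bigl(\dot{x}(t)+\gamma\nabla_x\mathcal{L}_t(x(t),y(t)+\theta(t)\dot{y}(t))\bigr)$ and using the first line of $(\ref{dyn})$ as in $(\ref{dot-mu})$, I obtain $\dot{\hat{\mu}}(t)=(qt^{q-1}-1)\dot{x}(t)-(\alpha-1)\dot{x}_t+(\gamma qt^{q-1}-t^{q+s})\nabla_x\mathcal{L}_t(x(t),y(t)+\theta(t)\dot{y}(t))$, i.e. the same expression as in Theorem~\ref{Th1} plus the extra term $-(\alpha-1)\dot{x}_t$. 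Expanding $\langle\hat{\mu},\dot{\hat{\mu}}\rangle$ and differentiating $\frac{\alpha-1}{2}(1-qt^{q-1})\|x(t)-x_t\|^2$ reproduces the structure of $(\ref{6})$, but now exploiting that $x_t$ is the actual minimizer of the $\frac{c}{t^p}$-strongly convex map $\mathcal{L}_t(\cdot,y_t)$: by $(\ref{strong})$, $\langle\nabla_x\mathcal{L}_t(x(t),y_t),x(t)-x_t\rangle\ge\mathcal{L}_t(x(t),y_t)-\mathcal{L}_t(x_t,y_t)+\frac{c}{2t^p}\|x(t)-x_t\|^2$, which, after multiplication by the negative factor $\gamma qt^{q-1}-t^{q+s}$, supplies the dissipative $\frac{c}{t^p}$-weighted terms. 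The $\hat{\mathcal{E}}_3$ computation is symmetric via strong concavity of $\mathcal{L}_t(x_t,\cdot)$, and exactly as in Theorem~\ref{Th1} the four bilinear $K$-coupling brackets cancel identically by the choice of the extrapolation parameter $\theta(t)$ in $(\ref{theta})$.

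\textbf{Assembling, and the main obstacle.} Summing the three derivatives, adding $\frac{M}{t^q}\hat{\mathcal{E}}(t)$, and bounding $\|\hat{\mu}(t)\|^2\le3\bigl((\alpha-1)^2\|x(t)-x_t\|^2+t^{2q}\|\dot{x}(t)\|^2+\gamma^2t^{2q}\|\nabla_x\mathcal{L}_t(x(t),y(t)+\theta(t)\dot{y}(t))\|^2\bigr)$ (and symmetrically in $y$) accounts for the gap contribution $M(t^{q+s}-2\gamma qt^{q-1}+\gamma)$ and for the $\frac{3M}{2}$-type entries in the $\|\dot{x}\|^2+\|\dot{y}\|^2$, $\Delta(t)$ and $l(t)$ coefficients. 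What then remains is to absorb all residual cross terms carrying $\dot{x}_t,\dot{y}_t$, namely $\langle K(x(t)-x_t),\dot{y}_t\rangle$, $\langle K^*(y(t)-y_t),\dot{x}_t\rangle$, $\langle x(t)-x_t,\dot{x}_t\rangle$, $\langle\dot{x}(t),\dot{x}_t\rangle$, $\langle\nabla_x\mathcal{L}_t,\dot{x}_t\rangle$ and their $y$-analogues, by weighted Young inequalities. This bookkeeping is the main obstacle: the weights must be selected so that every $\|x(t)-x_t\|^2$ and $\|y(t)-y_t\|^2$ byproduct is collected into $l(t)$ (the $K$-terms being absorbed against the strong-convexity dissipation $\frac{c}{t^p}$, which is what creates the $\frac1c$ and $t^{p}$ scalings there), while the leftover $\|\dot{x}_t\|^2+\|\dot{y}_t\|^2$ pieces assemble into the stated bracket, with $\frac{2}{(\alpha-1)c}(\|K\|^2+\|K^*\|^2)t^{q+p}(t^{2q+s}-2\gamma qt^{2q-1}+\gamma t^q)$ coming from the $K$-cross terms of $\hat{\mathcal{E}}_1$, and the $\frac12(\alpha-1)^2t^q$, $\frac12\gamma(\alpha-1)^2t^{-s}$ and $\frac{2(\alpha-1)\alpha}{c}(\alpha t^{p-s-q}-qt^{p-s-1})$ pieces coming from the $-(\alpha-1)\dot{x}_t$ term in $\dot{\hat{\mu}}$ and from differentiating $\|x(t)-x_t\|^2$. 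Collecting the remaining quantities by type (gap, $\|x-x_t\|^2+\|y-y_t\|^2$, $\|\dot{x}\|^2+\|\dot{y}\|^2$, $\Delta$, and $\|\dot{x}_t\|^2+\|\dot{y}_t\|^2$) then yields the claimed inequality for all $t\ge t_1$.
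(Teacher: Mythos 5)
Your proposal is correct and follows essentially the same route as the paper: the same decomposition of $\hat{\mathcal{E}}$, the same use of (\ref{KKT}) to reduce $\nabla_y\mathcal{L}_t(x(t),y_t)$ and $\nabla_x\mathcal{L}_t(x_t,y(t))$ to the bilinear terms $K(x(t)-x_t)$ and $K^*(y(t)-y_t)$, the same extra $-(\alpha-1)\dot{x}_t$ term in $\dot{\bar\mu}$, the same $\tfrac{3}{2}$-type bound for $\tfrac{M}{t^q}\hat{\mathcal{E}}(t)$, and the same weighted Young inequalities to absorb the $\dot{x}_t,\dot{y}_t$ cross terms into $l(t)$ and the final $\|\dot{x}_t\|^2+\|\dot{y}_t\|^2$ bracket. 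No substantive differences from the paper's argument.
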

\begin{proof}
 Obviously,
\begin{equation*}
\begin{aligned}
&\frac{1}{2}\|(\alpha-1)(x(t)-x_t)+t^q \left(\dot{x}(t)+\gamma \nabla_{x}\mathcal{L}_t\left(x(t),y(t)+\theta(t)\dot{y}(t)\right)\right)\|^2\\
\leq& \frac{3}{2}(\alpha-1)^2\|x(t)-x_t\|^2+\frac{3}{2}t^{2q}\|\dot{x}(t)\|^2+\frac{3}{2}\gamma^2t^{2q}\|\nabla_{x}\mathcal{L}_t\left(x(t),y(t)+\theta(t)\dot{y}(t)\right)\|^2
\end{aligned}
\end{equation*}
and
\begin{equation*}
\begin{aligned}
&\frac{1}{2}\|(\alpha-1)(y(t)-y_t)+t^q \left(\dot{y}(t)-\gamma \nabla_{y}\mathcal{L}_t\left(x(t)+\theta(t)\dot{x}(t),y(t)\right)\right)\|^2\\
\leq& \frac{3}{2}(\alpha-1)^2\|y(t)-y_t\|^2+\frac{3}{2}t^{2q}\|\dot{y}(t)\|^2+\frac{3}{2}\gamma^2t^{2q}\|\nabla_{y}\mathcal{L}_t\left(x(t)+\theta(t)\dot{x}(t),y(t)\right)\|^2.
\end{aligned}
\end{equation*}
Therefore, for any $ M>0 $,
\begin{equation}\label{M}
\begin{aligned}
\frac{M}{t^q}\hat{\mathcal{E}}(t)\leq& M\left(t^{q+s}-2\gamma q t^{q-1}+\gamma\right)\left(\mathcal{L}_t(x(t),y_t)-\mathcal{L}_t(x_t,y(t))\right)\\
&+\frac{M}{2}(\alpha-1)\left((3\alpha-2)t^{-q}-qt^{-1}\right)\left(  \|x(t)-x_t\|^2+\|y(t)-y_t\|^2  \right)\\
&+\frac{3M}{2}t^{q}(\| \dot{x}(t)\|^2+\| \dot{y}(t)\|^2)+\frac{3M}{2}\gamma^2t^{q}\Delta(t).
\end{aligned}
\end{equation}

Now, we analyze the time derivative of $ \hat{\mathcal{E}}(t) $. 

Firstly, from (\ref{asd}), it is easy to see that
\begin{equation*}
\begin{aligned}
\frac{d}{dt}\mathcal{L}_t(x(t),y_t)=&\langle\nabla f(x(t)),\dot{x}(t) \rangle+\frac{c}{t^p}\langle x(t),\dot{x}(t) \rangle -\frac{cp}{2t^{p+1}}\|x(t)\|^2+\langle K^*y_t,\dot{x}(t) \rangle\\
&+\langle Kx(t),\dot{y}_t \rangle-\langle\nabla g(y_t),\dot{y}_t \rangle-\frac{c}{t^p}\langle y_t,\dot{y}_t \rangle +\frac{cp}{2t^{p+1}}\|y_t\|^2\\
=&\langle\nabla_x\mathcal{L}_t(x(t),y_t),\dot{x}(t)\rangle+\langle\nabla_y\mathcal{L}_t(x(t),y_t),\dot{y}_t\rangle-\frac{cp}{2t^{p+1}}\left(\|x(t)\|^2-\|y_t\|^2\right).
\end{aligned}
\end{equation*}
Similarly, we can show that
\begin{equation*}
\frac{d}{dt}\mathcal{L}_t(x_t,y(t))=\langle\nabla_x\mathcal{L}_t(x_t,y(t)),\dot{x}_t\rangle+\langle\nabla_y\mathcal{L}_t(x_t,y(t)),\dot{y}(t)\rangle-\frac{cp}{2t^{p+1}}\left(\|x_t\|^2-\|y(t)\|^2\right).
\end{equation*}
Consequently,
\begin{equation}\label{e1-1}\resizebox{0.925\hsize}{!}{$
\begin{aligned}
\dot{\hat{\mathcal{E}}}_1(t)=&\left((2q+s)t^{2q+s-1}-2\gamma q(2q-1)t^{2q-2}+\gamma q t^{q-1}\right)
\left(\mathcal{L}_t(x(t),y_t)-\mathcal{L}_t(x_t,y(t))\right)\\
&+\left(t^{2q+s}-2\gamma q t^{2q-1}+\gamma t^{q}\right)\Big( \big(\left \langle\nabla_x\mathcal{L}_t(x(t),y_t),\dot{x}(t)\right \rangle+\left \langle\nabla_y\mathcal{L}_t(x(t),y_t),\dot{y}_t\right \rangle\\
&-\frac{cp}{2t^{p+1}}(\|x(t)\|^2-\|y_t\|)^2 \big) -\big(\left \langle\nabla_x\mathcal{L}_t(x_t,y(t)),\dot{x}_t\right \rangle+\left \langle\nabla_y\mathcal{L}_t(x_t,y(t)),\dot{y}(t)\right \rangle\\ &-\frac{cp}{2t^{p+1}}(\|x_t\|^2-\|y(t)\|)^2\big)\Big).
\end{aligned}  $}
\end{equation}
Note that
\begin{equation}\label{nab-10}
\nabla_y\mathcal{L}_t(x(t),y_t)=\nabla_y\mathcal{L}_t(x_t,y_t)+K(x(t)-x_t)=K(x(t)-x_t),
\end{equation}
and
\begin{equation}\label{nab-100}
\nabla_x\mathcal{L}_t(x_t,y(t))= \nabla_x\mathcal{L}_t(x_t,y_t)+K^*(y(t)-y_t)=K^*(y(t)-y_t),
\end{equation}
where the last two equalities of (\ref{nab-10})  and (\ref{nab-100}) hold  due to (\ref{KKT}). Therefore, combining (\ref{e1-1}), (\ref{nab-10}) and (\ref{nab-100}),  and rearranging the terms, we get
\begin{equation}\label{e1-2}
\small{\begin{aligned}
\dot{\hat{\mathcal{E}}}_1(t)=&\left((2q+s)t^{2q+s-1}-2\gamma q(2q-1)t^{2q-2}+\gamma q t^{q-1}\right)
\left(\mathcal{L}_t(x(t),y_t)-\mathcal{L}_t(x_t,y(t))\right)\\
&+\left(t^{2q+s}-2\gamma q t^{2q-1}+\gamma t^{q}\right)\Big( \left \langle\nabla_x\mathcal{L}_t(x(t),y_t),\dot{x}(t)\right \rangle-\left \langle\nabla_y\mathcal{L}_t(x_t,y(t)),\dot{y}(t)\right \rangle \bigg.\\
&+\left \langle K(x(t)-x_t),\dot{y}_t\right \rangle -\left \langle K^*(y(t)-y_t),\dot{x}_t\right \rangle  \Big)-\frac{cp}{2}\left(t^{2q+s-p-1}-2\gamma q t^{2q-p-2}\right. \\
& \left.+\gamma t^{q-p-1}\right)\big((\|x(t)\|^2+\|y(t)\|^2)-(\|x_t\|^2+\|y_t\|^2)\big).
\end{aligned}}    
\end{equation}

Secondly, let $$
\bar{\mu}(t):=(\alpha-1)(x(t)-x_t)+t^q \left(\dot{x}(t)+\gamma \nabla_{x}\mathcal{L}_t\left(x(t),y(t)+\theta(t)\dot{y}(t)\right)\right).$$
Together with the first equality of   (\ref{dyn}), it follows that
\begin{equation*}
\dot{\bar{\mu}}(t)=(qt^{q-1}-1)\dot{x}(t)-(\alpha-1)\dot{x}_t+(\gamma qt^{q-1}-t^{q+s})\nabla_{x}\mathcal{L}_t\left(x(t),y(t)+\theta (t)\dot{y}(t)\right).
\end{equation*}
Moreover, by a similar argument as  given in Theorem \ref{Th1},  we derive that  for any $ t\geq t_0 $,
\begin{equation*}
\begin{aligned}
&\left \langle \bar{\mu}(t),\dot{\bar{\mu}}(t)\right \rangle\\
\leq&(\alpha-1)(qt^{q-1}-1)\left \langle x(t)-x_t,\dot{x}(t)\right \rangle+t^q(qt^{q-1}-1)\| \dot{x}(t)\|^2\\
&+(\alpha-1)(\gamma qt^{q-1}-t^{q+s})\Big(\mathcal{L}_t(x(t),y_t)-\mathcal{L}_t(x_t,y_t)+\frac{c}{2t^p}\|x(t)-x_t\|^2\Big)\\
&+(\alpha-1)(\gamma qt^{q-1}-t^{q+s})\left \langle K^*(y(t)-y_t+\theta(t)\dot{y}(t)),x(t)-x_t\right \rangle\\
&-(t^{2q+s}-2\gamma qt^{2q-1}+\gamma t^q)\left \langle\nabla_x\mathcal{L}_t(x(t),y_t),\dot{x}(t)\right \rangle\\
&-(t^{2q+s}-2\gamma qt^{2q-1}+\gamma t^q)\left \langle K^*(y(t)-y_t+\theta(t)\dot{y}(t)),\dot{x}(t)\right \rangle\\
&+\gamma t^q(\gamma qt^{q-1}-t^{q+s})\|\nabla_{x}\mathcal{L}_t(x(t),y(t)+\theta(t)\dot{y}(t))\|^2\\
&+\left \langle (\alpha-1)(x(t)-x_t)+t^q \left(\dot{x}(t)+\gamma \nabla_{x}\mathcal{L}_t\left(x(t),y(t)+\theta(t)\dot{y}(t)\right)\right)  , -(\alpha-1)\dot{x}_t \right \rangle
\end{aligned}
\end{equation*}
and
\begin{equation*}
\begin{aligned}
&\frac{d}{dt}\Big( \frac{\alpha-1}{2}(1-qt^{q-1})\|x(t)-x_t\|^2 \Big)\\
=&\frac{(\alpha-1)q(1-q)}{2}t^{q-2}\|x(t)-x_t\|^2\\
&+(\alpha-1)(1-qt^{q-1})(\left \langle x(t)-x_t,\dot{x}(t)\right \rangle-\left \langle x(t)-x_t,\dot{x}_t\right \rangle).
\end{aligned}
\end{equation*}
Therefore, for any $ t\geq t_0 $,
\begin{equation}\label{6-2}
\small{\begin{aligned}
\dot{\hat{\mathcal{E}}}_2(t)=&\left \langle \bar{\mu}(t),\dot{\bar{\mu}}(t)\right \rangle+\frac{d}{dt}\Big( \frac{\alpha-1}{2}(1-qt^{q-1})\|x(t)-x_t\|^2 \Big)\\
\leq &t^q(qt^{q-1}-1)\| \dot{x}(t)\|^2+(\alpha-1)(\gamma qt^{q-1}-t^{q+s})\Big(\mathcal{L}_t(x(t),y_t)-\mathcal{L}_t(x_t,y_t)\Big)\\
&+\Big(\frac{(\alpha-1)q(1-q)}{2}t^{q-2}+\frac{(\alpha-1)c}{2}(\gamma qt^{q-1-p}-t^{q+s-p})\Big)         \|x(t)-x_t\|^2   \\
&+(\alpha-1)(\gamma qt^{q-1}-t^{q+s})\left \langle K^*(y(t)-y_t+\theta(t)\dot{y}(t)),x(t)-x_t\right \rangle\\
&-(t^{2q+s}-2\gamma qt^{2q-1}+\gamma t^q)\left \langle\nabla_x\mathcal{L}_t(x(t),y_t),\dot{x}(t)\right \rangle\\
&-(t^{2q+s}-2\gamma qt^{2q-1}+\gamma t^{q})\left \langle K^*(y(t)-y_t+\theta(t)\dot{y}(t)),\dot{x}(t)\right \rangle\\
&+\gamma t^q(\gamma qt^{q-1}-t^{q+s})\|\nabla_{x}\mathcal{L}_t(x(t),y(t)+\theta(t)\dot{y}(t))\|^2\\
&-(\alpha-1)(\alpha-qt^{q-1})\left \langle x(t)-x_t,\dot{x}_t \right \rangle -(\alpha-1)t^q\left \langle \dot{x}(t),\dot{x}_t\right \rangle\\
&-(\alpha-1)\gamma t^q \left \langle \nabla_{x}\mathcal{L}_t(x(t),y(t)+\theta(t)\dot{y}(t)),\dot{x}_t \right \rangle.
\end{aligned}}
\end{equation}
Similarly, we can deduce that
\begin{equation}\label{6-3}
\small{\begin{aligned}
\dot{\hat{\mathcal{E}}}_3(t)\leq &t^q(qt^{q-1}-1)\| \dot{y}(t)\|^2+(\alpha-1)(\gamma qt^{q-1}-t^{q+s})\left(\mathcal{L}_t(x_t,y_t)-\mathcal{L}_t(x_t,y(t))\right)\\
&+\Big(\frac{(\alpha-1)q(1-q)}{2}t^{q-2}+\frac{(\alpha-1)c}{2}(\gamma qt^{q-1-p}-t^{q+s-p})\Big)         \|y(t)-y_t\|^2   \\
&-(\alpha-1)(\gamma qt^{q-1}-t^{q+s})\left \langle K(x(t)-x_t+\theta(t)\dot{x}(t)),y(t)-y_t\right \rangle\\
&+(t^{2q+s}-2\gamma qt^{2q-1}+\gamma t^q)\left \langle\nabla_y\mathcal{L}_t(x_t,y(t)),\dot{y}(t)\right \rangle\\
&+(t^{2q+s}-2\gamma qt^{2q-1}+\gamma t^q)\left \langle K(x(t)-x_t+\theta(t)\dot{x}(t)),\dot{y}(t)\right \rangle\\
&+\gamma t^q(\gamma qt^{q-1}-t^{q+s})\|\nabla_{y}\mathcal{L}_t((x(t)+\theta(t)\dot{x}(t)),y(t))\|^2\\
&-(\alpha-1)(\alpha-qt^{q-1})\left \langle y(t)-y_t,\dot{y}_t \right \rangle -(\alpha-1)t^q\left \langle \dot{y}(t),\dot{y}_t\right \rangle\\
&+(\alpha-1)\gamma t^q \left \langle \nabla_{y}\mathcal{L}_t((x(t)+\theta(t)\dot{x}(t)),y(t)),\dot{y}_t \right \rangle,
\end{aligned}}
\end{equation}
 for any $ t\geq t_0 $.
 
Finally, combining (\ref{theta}), (\ref{e1-2}), (\ref{6-2}) and(\ref{6-3}), we have
\begin{equation*}
\small{\begin{aligned}
\dot{\hat{\mathcal{E}}}(t)
\leq&a(t)\big(\mathcal{L}_t(x(t),y_t)-\mathcal{L}_t(x_t,y(t))\big)\\
&-\frac{cp}{2}\left(t^{2q+s-p-1}-2\gamma q t^{2q-p-2}+\gamma t^{q-p-1}\right)\big((\|x(t)\|^2+\|y(t)\|^2)-(\|x_t\|^2+\|y_t\|^2)\big)\\
&+  \frac{\alpha-1}{2}\left(q(1-q)t^{q-2}+c\gamma qt^{q-1-p}-ct^{q+s-p}      \right)  \big(\|x(t)-x_t\|^2 +\|y(t)-y_t\|^2\big)        \\
&+t^q(qt^{q-1}-1)(\| \dot{x}(t)\|^2+\| \dot{y}(t)\|^2)+\gamma t^q(\gamma qt^{q-1}-t^{q+s})\Delta(t)\\
&+\left(t^{2q+s}-2\gamma q t^{2q-1}+\gamma t^q \right)(\left \langle K(x(t)-x_t),\dot{y}_t\right \rangle -\left \langle K^*(y(t)-y_t),\dot{x}_t\right \rangle)\\
&-(\alpha-1)(\alpha-qt^{q-1})(\left \langle x(t)-x_t,\dot{x}_t \right \rangle+\left \langle y(t)-y_t,\dot{y}_t \right \rangle) \\
&-(\alpha-1)t^q(\left \langle \dot{x}(t),\dot{x}_t\right \rangle+\left \langle \dot{y}(t),\dot{y}_t\right \rangle) \\
&-(\alpha-1)\gamma t^q\left(\left \langle \nabla_{x}\mathcal{L}_t(x(t),y(t)+\theta(t)\dot{y}(t)),\dot{x}_t \right \rangle-\left \langle \nabla_{y}\mathcal{L}_t(x(t)+\theta(t)\dot{x}(t),y(t)),\dot{y}_t \right \rangle\right),
\end{aligned}}  
\end{equation*}
   for any $ t\geq t_0 $. Here, $ a(t) $ and $ \Delta(t) $ are defined as (\ref{at}) and (\ref{Delta}), respectively.
This, together with (\ref{M}), yields
\begin{equation}\label{dt-3}\resizebox{0.925\hsize}{!}{$
\begin{aligned}
&\frac{M}{t^q}\hat{\mathcal{E}}(t)+\dot{\hat{\mathcal{E}}}(t)\\
\leq&\big(M\left(t^{q+s}-2\gamma q t^{q-1}+\gamma\right)+a(t) \big)\left(\mathcal{L}_t(x(t),y_t)-\mathcal{L}(x_t,y(t))\right)\\
& -\frac{cp}{2}\left(t^{2q+s-p-1}-2\gamma q t^{2q-p-2}+\gamma t^{q-p-1}\right)\big((\|x(t)\|^2+\|y(t)\|^2)-(\|x_t\|^2+\|y_t\|^2)\big)\\
&+\Big(\frac{M}{2}(\alpha-1)\left((3\alpha-2)t^{-q}-qt^{-1}\right)+ \frac{\alpha-1}{2}(q(1-q)t^{q-2}\\
&+c\gamma qt^{q-1-p}-ct^{q+s-p}      ) \Big) (\|x(t)-x_t\|^2+\|y(t)-y_t\|^2)         \\
&+\Big(\Big(\frac{3M}{2}-1\Big)t^{q}+qt^{2q-1}\Big)(\| \dot{x}(t)\|^2+\| \dot{y}(t)\|^2)+\Big(\frac{3M}{2}\gamma^2t^{q}+\gamma^2q t^{2q-1}-\gamma t^{2q+s}\Big)\Delta(t)\\
&+\left(t^{2q+s}-2\gamma q t^{2q-1}+\gamma t^q\right)(\left \langle K(x(t)-x_t),\dot{y}_t\right \rangle -\left \langle K^*(y(t)-y_t),\dot{x}_t\right \rangle)\\
&-(\alpha-1)(\alpha-qt^{q-1})(\left \langle x(t)-x_t,\dot{x}_t \right \rangle+\left \langle y(t)-y_t,\dot{y}_t \right \rangle) -(\alpha-1)t^q(\left \langle \dot{x}(t),\dot{x}_t\right \rangle+\left \langle \dot{y}(t),\dot{y}_t\right \rangle)\\
&-(\alpha-1)\gamma t^q (\left \langle \nabla_{x}\mathcal{L}_t(x(t),y(t)+\theta(t)\dot{y}(t)),\dot{x}_t \right \rangle-\left \langle \nabla_{y}\mathcal{L}_t(x(t)+\theta(t)\dot{x}(t),y(t)),\dot{y}_t \right \rangle).
\end{aligned}    $}
\end{equation}

On the other hand, there exists $ t_1'\geq t_0 $ such that $$ t^{2q+s}-2\gamma q t^{2q-1}+\gamma t^q\geq 0, \forall t\geq t_1'.$$
Further, note that
\begin{equation*}
\small{\begin{split}
\left \langle K(x(t)-x_t),\dot{y}_t\right \rangle	\leq\frac{1}{2}\left( \frac{1}{\frac{4}{(\alpha-1)c}t^{q+p}\|K\|^2}\|K\|^2\|x(t)-x_t\|^2+\frac{4}{(\alpha-1)c}t^{q+p}\|K\|^2\|\dot{y}_t\|^2\right)
\end{split}}
\end{equation*}
		and
\begin{equation*}
\small{\begin{split}
		\left \langle K^*(y(t)-y_t),\dot{x}_t\right \rangle
			\leq\frac{1}{2}\left( \frac{1}{\frac{4}{(\alpha-1)c}t^{q+p}\|K^*\|^2}\|K^*\|^2\|y(t)-y_t\|^2+\frac{4}{(\alpha-1)c}t^{q+p}\|K^*\|^2\|\dot{x}_t\|^2\right).
\end{split}}
\end{equation*}
Thus, 
\begin{equation}\label{est-1}
\begin{split}
&\left(t^{2q+s}-2\gamma q t^{2q-1}+\gamma t^q\right)(\left \langle K(x(t)-x_t),\dot{y}_t\right \rangle -\left \langle K^*(y(t)-y_t),\dot{x}_t\right \rangle)\\
\leq&\frac{1}{2}\left(t^{2q+s}-2\gamma q t^{2q-1}+\gamma t^q\right)\Big(\frac{1}{\frac{4}{(\alpha-1)c}t^{q+p}} (\|x(t)-x_t\|^2+\|y(t)-y_t\|^2)\\
& +\frac{4}{(\alpha-1)c}t^{q+p}(\|K\|^2+\|K^*\|^2)(\|\dot{x}_t\|^2+\|\dot{y}_t\|^2) \Big)\\
=&\frac{(\alpha-1)c}{8}\left(t^{q+s-p}-2\gamma qt^{q-1-p}+\gamma t^{-p}\right)(\|x(t)-x_t\|^2+\|y(t)-y_t\|^2)\\
&+\frac{2}{(\alpha-1)c}\left(t^{3q+s+p}-2\gamma  qt^{3q-1+p}+\gamma t^{2q+p}\right)(\|K\|^2+\|K^*\|^2)(\|\dot{x}_t\|^2+\|\dot{y}_t\|^2),
\end{split}
\end{equation}
for all $ t\geq t_1' $. 

In a same manner as above, we can deduce that there exsits $ t_1\geq t_1' $ such that for all $ t\geq t_1 $,

\begin{equation}
\small{\begin{split}\label{est-2}
&-(\alpha-1)(\alpha-qt^{q-1})(\left \langle x(t)-x_t,\dot{x}_t \right \rangle+\left \langle y(t)-y_t,\dot{y}_t \right \rangle)\\
\leq&\frac{1}{2}(\alpha-1)(\alpha-qt^{q-1})\left(\frac{1}{4\frac{\alpha}{c}t^{p-s-q}}(\|x(t)-x_t\|^2+\|y(t)-y_t\|^2)\right.\\
&\left.~~~~+4\frac{\alpha}{c}t^{p-s-q}(\|\dot{x}_t\|^2+\|\dot{y}_t\|^2)\right)\\
=&\frac{(\alpha-1)c}{8}\left(t^{q+s-p}-\frac{q}{\alpha}t^{2q+s-1-p}\right)(\|x(t)-x_t\|^2+\|y(t)-y_t\|^2)\\
&+\frac{2(\alpha-1)\alpha}{c}\left(\alpha t^{p-s-q}-q t^{p-s-1}\right)(\|\dot{x}_t\|^2+\|\dot{y}_t\|^2),
\end{split}}
\end{equation}
\begin{equation}\label{est-3}
\small{\begin{split}
&-(\alpha-1)t^q(\left \langle \dot{x}(t),\dot{x}_t\right \rangle+\left \langle \dot{y}(t),\dot{y}_t\right \rangle)\\
\leq&\frac{1}{2}t^q(\| \dot{x}(t)\|^2+\| \dot{y}(t)\|^2)
+\frac{1}{2}(\alpha-1)^2t^{q}(\|\dot{x}_t\|^2+\|\dot{y}_t\|^2),  
\end{split}}
\end{equation}
and
\begin{equation}\label{est-4}
\small{\begin{aligned}
&-(\alpha-1)\gamma t^q (\left \langle \nabla_{x}\mathcal{L}_t(x(t),y(t)+\theta(t)\dot{y}(t)),\dot{x}_t \right \rangle-\left \langle \nabla_{y}\mathcal{L}_t(x(t)+\theta(t)\dot{x}(t),y(t)),\dot{y}_t \right \rangle)\\
\leq&\frac{1}{2}\gamma t^{2q+s}\Delta(t)+\frac{1}{2}\gamma(\alpha-1)^2  t^{-s}(\|\dot{x}_t\|^2+\|\dot{y}_t\|^2).
\end{aligned}}
\end{equation}
Combining (\ref{est-1}), (\ref{est-2}),  (\ref{est-3}) and  (\ref{est-4}) with (\ref{dt-3}), we have 
\begin{equation*}\resizebox{0.95\hsize}{!}{$
\begin{aligned}
&\frac{M}{t^q}\hat{\mathcal{E}}(t)+\dot{\hat{\mathcal{E}}}(t)\\
\leq&\Big(M\left(t^{q+s}-2\gamma q t^{q-1}+\gamma\right)+a(t) \Big)\left(\mathcal{L}_t(x(t),y_t)-\mathcal{L}(x_t,y(t))\right) -\frac{cp}{2}\left(t^{2q+s-p-1}\right.\\
& \left.-2\gamma q t^{2q-p-2}+\gamma t^{q-p-1}\right)\left((\|x(t)\|^2+\|y(t)\|^2)-(\|x_t\|^2+\|y_t\|^2)\right)\\
&+\Big(\frac{M(\alpha-1)(3\alpha-2)}{2}t^{-q}+\frac{(\alpha-1)c\gamma}{8}t^{-p}-\frac{M(\alpha-1)q}{2}t^{-1}-\frac{(\alpha-1)c}{4}t^{q+s-p}\\
&+\frac{(\alpha-1)c\gamma q }{4}t^{q-1-p}-\frac{(\alpha-1)qc}{8\alpha}t^{2q+s-1-p}+\frac{(\alpha-1)q(1-q)}{2}t^{q-2}\Big)(\|x(t)-x_t\|^2+\|y(t)-y_t\|^2) \\
&+\Big(\frac{3M-1}{2}t^{q}+qt^{2q-1}\Big)(\| \dot{x}(t)\|^2+\| \dot{y}(t)\|^2)+\Big(\frac{3M}{2}\gamma^2t^{q}+\gamma^2 q t^{2q-1}-\frac{1}{2}\gamma t^{2q+s}\Big)\Delta(t)\\
&+ \Big( \frac{2}{(\alpha-1)c}(\|K\|^2+\|K^*\|^2)\left(t^{3q+s+p}-2\gamma  qt^{3q-1+p}+\gamma t^{2q+p}\right)  \\
 &+\frac{2(\alpha-1)\alpha}{c}\left(\alpha t^{p-s-q}-q t^{p-s-1}\right)  +   \frac{1}{2}(\alpha-1)^2t^{q}+   \frac{1}{2}\gamma(\alpha-1)^2  t^{-s}
\Big)  (\|\dot{x}_t\|^2+\|\dot{y}_t\|^2),
\end{aligned}$}
\end{equation*}
  for any $ t\geq t_1 $. The proof is complete.
\end{proof}

Now, we establish the main result of this section, which demonstrates that both    the   convergence rate of the primal-dual gap and the strong convergence of the trajectory can be achieved simultaneously.
\begin{theorem}\label{slow} Let $ (x ,y ):\left[t_0, +\infty\right)\to\mathbb{R}^n \times \mathbb{R}^m  $ be a global solution of the dynamical system \textup{(\ref{dyn})} and let $ (x_t,y_t) $ be the saddle point of $ \mathcal{L}_t $. Suppose that $$0<p<1, ~p-2q-s<0  , ~   4q+s+p-2<0 , \mbox {and }~ 3q+s-p-1<0 .$$ Then, for $  (\bar{x}^*,\bar{y}^*)=\textup{Proj}_{\Omega}{0} $,
\begin{equation*}
\lim_{t\to +\infty} \|(x(t),y(t))-(\bar{x}^*,\bar{y}^*)\| =0,
\end{equation*}
and the following statements hold:
\begin{enumerate}
\item[{\rm (i)}] If $ q+2p\geq1 $, one has
\begin{equation*}
\mathcal{L}_t(x(t),y_t)-\mathcal{L}_t(x_t,y(t))= \mathcal{O}\left(\frac{1}{t^{2-2q-p}} \right),  ~~as~~   t\to +\infty ,
\end{equation*}and
\begin{equation*}
\|x(t)-x_t\|+\|y(t)-y_t\|= \mathcal{O}\left(\frac{1}{t^{1-2q-\frac{1}{2}(s+p)}} \right), ~~as~~   t\to +\infty.
\end{equation*}

\item[{\rm (ii)}] If $ q+2p<1 $, one has
\begin{equation*}
\mathcal{L}_t(x(t),y_t)-\mathcal{L}_t(x_t,y(t))= \mathcal{O}\left(\frac{1}{t^{1-q+p}} \right),  ~~as~~   t\to +\infty ,
\end{equation*}and
\begin{equation*}
\|x(t)-x_t\|+\|y(t)-y_t\|= \mathcal{O}\left(\frac{1}{t^{\frac{1}{2}(1+p-s)-\frac{3}{2}q}} \right),  ~~as~~   t\to +\infty .
\end{equation*}
\end{enumerate}

\end{theorem}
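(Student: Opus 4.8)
The plan is to turn the differential inequality of Lemma~\ref{lemma4.21} into a clean Gr\"onwall estimate of the form $\dot{\hat{\mathcal{E}}}(t)+\frac{M}{t^q}\hat{\mathcal{E}}(t)\le C t^{r}$ and then integrate it against the natural integrating factor attached to the decay rate $M/t^q$. The first move is to fix the free constant $M$ small, say $0<M<\min\{1/3,\alpha-1\}$, chosen precisely so that as many coefficients on the right-hand side of Lemma~\ref{lemma4.21} as possible are rendered nonpositive for $t$ large, leaving only the genuinely forcing terms to be absorbed into a single power of $t$.

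For the sign bookkeeping I would argue for $t$ large as follows. The coefficient of the augmented gap $\mathcal{L}_t(x(t),y_t)-\mathcal{L}_t(x_t,y(t))$ (which is nonnegative by~(\ref{Saddlepoint-t})) is $M(t^{q+s}-\cdots)+a(t)$, whose leading power is $(M-(\alpha-1))t^{q+s}<0$ since $M<\alpha-1$, so this term is $\le0$. The velocity coefficient $\frac{3M-1}{2}t^q+qt^{2q-1}$ has leading term $\frac{3M-1}{2}t^q<0$ because $M<1/3$ and $q<1$; the $\Delta(t)$-coefficient $\frac{3M}{2}\gamma^2t^q+\gamma^2qt^{2q-1}-\frac12\gamma t^{2q+s}$ is dominated by $-\frac12\gamma t^{2q+s}$; and the coefficient $l(t)$ is dominated by its highest-order term $-\frac{(\alpha-1)c}{4}t^{q+s-p}$, which is negative. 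Hence all four of these terms are $\le0$ and may be discarded. Only two contributions survive: the term carrying the factor $(\|x(t)\|^2+\|y(t)\|^2)-(\|x_t\|^2+\|y_t\|^2)$, which I bound in absolute value using boundedness of the trajectory (Theorem~\ref{Th1}) together with $\|x_t\|^2+\|y_t\|^2\le 2\|\bar z^*\|^2$ from~(\ref{z1}), giving $O(t^{2q+s-p-1})$; and the term carrying $\|\dot x_t\|^2+\|\dot y_t\|^2$, which I bound via $\|\dot z_t\|^2=O(t^{-2})$ from Lemma~\ref{xtyt}(ii) against its coefficient of order $t^{3q+s+p}$, giving $O(t^{3q+s+p-2})$. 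This yields
\[
\dot{\hat{\mathcal{E}}}(t)+\frac{M}{t^q}\hat{\mathcal{E}}(t)\le C\,t^{r},\qquad r=\max\{2q+s-p-1,\ 3q+s+p-2\},
\]
for $t$ large and some $C>0$.

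Next I would integrate with the factor $I(t)=\exp\!\big(\tfrac{M}{1-q}t^{1-q}\big)$, well defined since $0<q<1$. Because $I'(t)=Mt^{-q}I(t)$, integration by parts gives $\int^t I(s)s^r\,ds\sim \tfrac1M t^{q+r}I(t)$, and as $I(t)^{-1}$ decays faster than any power the initial data disappear, leaving $\hat{\mathcal{E}}(t)=O(t^{q+r})$. The identity $(3q+s+p-2)-(2q+s-p-1)=q+2p-1$ is exactly what produces the stated dichotomy: if $q+2p\ge1$ then $r=3q+s+p-2$ and $\hat{\mathcal{E}}(t)=O(t^{4q+s+p-2})$, whereas if $q+2p<1$ then $r=2q+s-p-1$ and $\hat{\mathcal{E}}(t)=O(t^{3q+s-p-1})$.

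Finally I would read off the rates from the structure of~(\ref{def}): since $\hat{\mathcal{E}}_1\ge0$ and $\hat{\mathcal{E}}_2,\hat{\mathcal{E}}_3\ge \frac{\alpha-1}{2}(1-qt^{q-1})\|\cdot\|^2\ge0$ for $t$ large, the bound on $\hat{\mathcal{E}}$ transfers to each nonnegative summand. Dividing the $\hat{\mathcal{E}}_1$ estimate by $t^{2q+s}-\cdots\sim t^{2q+s}$ gives the gap rates $O(t^{-(2-2q-p)})$ and $O(t^{-(1-q+p)})$, while dividing the quadratic parts of $\hat{\mathcal{E}}_2,\hat{\mathcal{E}}_3$ by $\frac{\alpha-1}{2}(1-qt^{q-1})\to\frac{\alpha-1}{2}$ and taking square roots gives the claimed rates for $\|x(t)-x_t\|+\|y(t)-y_t\|$. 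The hypotheses $4q+s+p-2<0$ and $3q+s-p-1<0$ make $q+r<0$ in each case, so $\|x(t)-x_t\|,\|y(t)-y_t\|\to0$; combining this with $\|(x_t,y_t)-(\bar x^*,\bar y^*)\|\to0$ from Lemma~\ref{xtyt}(i) and the triangle inequality yields $\|(x(t),y(t))-(\bar x^*,\bar y^*)\|\to0$. The main obstacle is the term-by-term sign analysis carried out under a \emph{single} admissible choice of $M$, and correctly recognizing that the competition between the $O(t^{2q+s-p-1})$ Tikhonov/state term and the $O(t^{3q+s+p-2})$ minimal-norm velocity term is what drives the case split; once the reduced inequality is in hand, the integrating-factor asymptotic is routine.
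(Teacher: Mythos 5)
Your overall route coincides with the paper's: the same choice $0<M<\min\{1/3,\alpha-1\}$, the same leading-power sign analysis killing the coefficients of the augmented gap, of $\|\dot x(t)\|^2+\|\dot y(t)\|^2$, of $\Delta(t)$ and of $\|x(t)-x_t\|^2+\|y(t)-y_t\|^2$ (the last using $p-2q-s<0$ to make $-\frac{(\alpha-1)c}{4}t^{q+s-p}$ dominant in $l(t)$), the same reduced inequality $\dot{\hat{\mathcal{E}}}(t)+\frac{M}{t^q}\hat{\mathcal{E}}(t)\le C\bigl(t^{3q+s+p-2}+t^{2q+s-p-1}\bigr)$, the same integrating factor $e^{\frac{M}{1-q}t^{1-q}}$ with the asymptotic $\int^t\omega^{r}e^{\frac{M}{1-q}\omega^{1-q}}d\omega\lesssim t^{q+r}e^{\frac{M}{1-q}t^{1-q}}$, and the same extraction of the two cases from the sign of $q+2p-1$ and of the rates from the structure of $(\ref{def})$.

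There is, however, one step that fails as written: you propose to bound the term carrying $(\|x(t)\|^2+\|y(t)\|^2)-(\|x_t\|^2+\|y_t\|^2)$ in absolute value ``using boundedness of the trajectory (Theorem \ref{Th1})''. Theorem \ref{Th1} is established under the hypotheses $s>\max\{0,p-2\}$ and $p-q-s-1>0$; the latter forces $p>1+q+s>1$, which is incompatible with the standing assumption $0<p<1$ of Theorem \ref{slow}. So boundedness of $(x(t),y(t))$ is not available from Theorem \ref{Th1} in this parameter regime, and assuming it outright would be circular, since in the present setting it is obtained only as a consequence of the estimate being proved (via $\|x(t)-x_t\|\to0$ and the boundedness of $(x_t,y_t)$ from Lemma \ref{xtyt}). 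The repair is local and is exactly what the paper does: do not take absolute values, but split the term. The half
\[
-\frac{cp}{2}\left(t^{2q+s-p-1}-2\gamma q t^{2q-p-2}+\gamma t^{q-p-1}\right)\left(\|x(t)\|^2+\|y(t)\|^2\right)
\]
is nonpositive for $t$ large, because the bracket has leading power $t^{2q+s-p-1}$ with positive coefficient, and is simply discarded together with the other nonpositive terms; only the half $+\frac{cp}{2}\left(\cdots\right)\left(\|x_t\|^2+\|y_t\|^2\right)\le cp\left(\cdots\right)\|\bar{z}^*\|^2$, controlled by $(\ref{z1})$, enters the forcing term, and it contributes the same $\mathcal{O}(t^{2q+s-p-1})$ you claimed. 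With this substitution the remainder of your argument goes through unchanged.
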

\begin{proof}
We start with the energy function $ \hat{\mathcal{E}}:\left[t_0,+\infty \right) \to \mathbb{R} $ defined  as (\ref{def}). It is easy to verify that there exists  $ t''_1\geq t_0 $ such that    $\hat{\mathcal{E}}(t)\geq0  $  for all $t\geq t''_1$.

Let $ \bar{z}^*=(\bar{x}^*,\bar{y}^*) $. By virtue of  Lemma \ref{lemma4.21} and (\ref{z1}), we have
 \begin{equation}\label{the41}
 \small{\begin{split}
 &\frac{M}{t^q}\hat{\mathcal{E}}(t)+\dot{\hat{\mathcal{E}}}(t)\\
 \leq&\Big(M\left(t^{q+s}-2\gamma q t^{q-1}+\gamma\right)+a(t) \Big)\left(\mathcal{L}_t(x(t),y_t)-\mathcal{L}(x_t,y(t))\right)\\
 & -\frac{cp}{2}\left(t^{2q+s-p-1}-2\gamma q t^{2q-p-2}+\gamma t^{q-p-1}\right)(\|x(t)\|^2+\|y(t)\|^2)\\
 &+l(t)\left(\|x(t)-x_t\|^2+\|y(t)-y_t\|^2\right) +\Big(\frac{3M-1}{2}t^{q}+qt^{2q-1}\Big)\left(\| \dot{x}(t)\|^2+\| \dot{y}(t)\|^2\right)\\
 &+\Big(\frac{3M}{2}\gamma^2t^{q}+\gamma^2 q t^{2q-1}-\frac{1}{2}\gamma t^{2q+s}\Big)\Delta(t)+ m(t)  \|\bar{z}^*\|^2,
 \end{split}}   
 \end{equation}
where
\begin{equation*}\resizebox{0.85\hsize}{!}{$
\begin{aligned}
 m(t)=& cp\left(t^{2q+s-p-1}-2\gamma q t^{2q-p-2}+\gamma t^{q-p-1}\right)\\ &+\frac{4p^2}{(\alpha-1)c}(\|K\|^2+\|K^*\|^2)\left(t^{3q+s+p-2}-2\gamma  qt^{3q+p-3}+\gamma t^{2q+p-2}\right)  \\ &+\frac{4(\alpha-1)\alpha p^2}{c}\left(\alpha t^{p-s-q-2}-q t^{p-s-3}\right)  +   (\alpha-1)^2p^2\left(t^{q-2}+   \gamma  t^{-s-2} \right).
\end{aligned} $}
\end{equation*}

We analyze the coefficients on the right hand side of the inequality (\ref{the41}). 

Clearly,
\begin{equation*}
\begin{aligned}
M\left(t^{q+s}-2\gamma q t^{q-1}+\gamma\right)+a(t)=&\left(M-(\alpha-1)\right)t^{q+s}+(\alpha-2M)\gamma q t^{q-1}\\
&+(2q+s)t^{2q+s-1}-2\gamma q(2q-1)t^{2q-2}
+ M\gamma.
\end{aligned}
\end{equation*}
 Now, set $ M<\min\{\frac{1}{3}, \alpha-1\} $. Together with $(\ref{coef0})$, $ \alpha>1 $ and $ M<\alpha-1 $, there exists $ t'_2\geq t''_1 $ such that
\begin{equation}\label{41the1}
 M\left(t^{q+s}-2\gamma q t^{q-1}+\gamma\right)+a(t)\leq 0, ~~~\forall  t\geq t'_2,
 \end{equation}
  and
\begin{equation}\label{41the2}
-\frac{cp}{2}\left(t^{2q+s-p-1}-2\gamma q t^{2q-p-2}+\gamma t^{q-p-1}\right)\leq0,  ~~~\forall  t\geq t'_2.
\end{equation}
Similarly, there exists $ t'_3\geq t'_2$ such that
\begin{equation}\label{41the3}
\frac{3M-1}{2}t^{q}+qt^{2q-1}\leq0 \mbox{ and }
\frac{3M}{2}\gamma^2t^{q}+\gamma^2 q t^{2q-1}-\frac{1}{2}\gamma t^{2q+s}\leq0, ~~\forall t\geq t'_3.
\end{equation}

Next, we consider the coefficient $ l(t) $. By $ 0<q<1 $, $ 0<p<1 $, $s>0 $ and  $ p-2q-s<0 $, we have $$ q+s-p>-p ,~
q+s-p>-q>-1>q-2,
$$ and
$$
q+s-p>2q+s-1-p>q-1-p.
$$
Since $ -\frac{(\alpha-1)c}{4}<0 $, it follows that there exists $ t'_4\geq t'_3 $ such that
\begin{equation}\label{41the4}
l(t)\leq 0, ~~\forall~ t\geq t'_4.
 \end{equation}
According to (\ref{Saddlepoint-t}),  we have $ \mathcal{L}_t(x(t),y_t)-\mathcal{L}_t(x_t,y(t))\geq0 $. Therefore, combining $(\ref{the41})$, $(\ref{41the1})$, $(\ref{41the2})$, $(\ref{41the3})$  and $(\ref{41the4})$,  and neglecting the nonpositive terms, we obtain that
\begin{equation}\label{45ty}
\frac{M}{t^q}\hat{\mathcal{E}}(t)+\dot{\hat{\mathcal{E}}}(t)\leq m(t)\|\bar{z}^*\|^2,  ~~~\forall t\geq t'_4.
\end{equation}

Now, we analyze the coefficient $ m(t) $ of $(\ref{45ty})$. It is easy to verify that
\begin{equation*}\left\{
\begin{array}{l}
2q+s-p-1>q-p-1>2q-p-2,\\
3q+s+p-2>q-2>-s-2,\\
3q+s+p-2>3q+p-3,\\
3q+s+p-2>2q+p-2>p-s-q-2>p-s-3.
\end{array}\right.
\end{equation*}
Thus,  we can deduce that there exists   $ t'_5\geq t'_4$ and a constant $C_0>0$ such that
\begin{equation*}
m(t)\|\bar{z}^*\|^2\leq C_0 (t^{3q+s+p-2}+t^{2q+s-p-1}),~~~\forall~t\geq t'_5.
\end{equation*}
Consequently, $(\ref{45ty})$ leads to
\begin{equation}\label{dt-1-1}
\frac{M}{t^q}\hat{\mathcal{E}}(t)+\dot{\hat{\mathcal{E}}}(t)\leq C_0 (t^{3q+s+p-2}+t^{2q+s-p-1}),~~\forall~t\geq t'_5.
\end{equation}
Multiplying $ e^{\frac{M}{1-q}t^{1-q}} $ on both sides of (\ref{dt-1-1}), we have
\begin{equation*}
\frac{d}{dt}\left(e^{\frac{M}{1-q}t^{1-q}}\hat{\mathcal{E}}(t) \right)\leq C_0 (t^{3q+s+p-2}+t^{2q+s-p-1})e^{\frac{M}{1-q}t^{1-q}},~~~\forall~t\geq t'_5.
\end{equation*}
Then, integrating it from $ t'_5 $ to $ t $, we obtain
\begin{equation}\label{dt-1-2}
e^{\frac{M}{1-q}t^{1-q}}\hat{\mathcal{E}}(t)\leq C_1+C_0\int_{t'_5}^{t} (\omega^{3q+s+p-2}+\omega^{2q+s-p-1})e^{\frac{M}{1-q}\omega^{1-q}}d\omega, ~~~\forall~t\geq t'_5,
\end{equation}
where $ C_1:= e^{\frac{M}{1-q}{t'_5}^{1-q}}\hat{\mathcal{E}}(t'_5) $.

In what follows,  we consider the right hand side of $(\ref{dt-1-2})$.
Using  the equation $(60)$ introduced in \cite{L2024S},  we deduce that there exist $ t'_6\geq t'_5 $, $ C_2>0$ and $ C_3>0 $ such that
\begin{equation}\label{dt-1-20}
t^{3q+s+p-2}e^{\frac{M}{1-q}t^{1-q}} \leq C_2 \frac{d}{dt}\left( t^{4q+s+p-2}e^{\frac{M}{1-q}t^{1-q}}\right), ~~~\forall  t\geq t'_6,
\end{equation}
and
\begin{equation}\label{dt-1-200}
t^{2q+s-p-1}e^{\frac{M}{1-q}t^{1-q}} \leq C_3 \frac{d}{dt}\left(t^{3q+s-p-1}e^{\frac{M}{1-q}t^{1-q}}\right),~~~\forall  t\geq t'_6.
\end{equation}
Obviously, from $(\ref{dt-1-20})$, we have
 \begin{align}\label{dt-1-2001}
 \int_{t'_5}^{t} \omega^{3q+s+p-2}e^{\frac{M}{1-q}\omega^{1-q}} d\omega  \leq &c'_1 +C_2 \int_{t'_6}^{t}\frac{d}{d\omega}\Big( \omega^{4q+s+p-2}e^{\frac{M}{1-q}\omega^{1-q}}\Big)d\omega&\nonumber\\
 =&c'_1 +C_2 \Big(t^{4q+s+p-2}e^{\frac{M}{1-q}t^{1-q}}-C_4\Big),&
 \end{align}
 where $c'_1:= C_2 \int_{t'_5}^{t'_6} \omega^{3q+s+p-2}e^{\frac{M}{1-q}\omega^{1-q}} d\omega  $ and $C_4:=t_{6}^{'4q+s+p-2}e^{\frac{M}{1-q}t_6'^{1-q}}.$
Similarly, we can deduce from $(\ref{dt-1-200})$ that
 \begin{equation}\label{dt-1-2002}
 \int_{t'_5}^{t} \omega^{2q+s-p-1}e^{\frac{M}{1-q}\omega^{1-q}} d\omega  \leq
 c'_2 +C_3 \left(t^{3q+s-p-1}e^{\frac{M}{1-q}t^{1-q}}-C_5\right),
 \end{equation}
 where $c'_2:=C_3 \int_{t'_5}^{t'_6} \omega^{2q+s-p-1}e^{\frac{M}{1-q}\omega^{1-q}} d\omega  $ and $C_5:=t_{6}^{'3q+s-p-1}e^{\frac{M}{1-q}t_6'^{1-q}}.$ By using $(\ref{dt-1-2001})$ and $(\ref{dt-1-2002})$,  $(\ref{dt-1-2})$ becomes
\begin{equation*}
\begin{aligned}
&e^{\frac{M}{1-q}t^{1-q}}\hat{\mathcal{E}}(t)\\
\leq &C_1+C_0\Big(c'_1+C_2 \left(t^{4q+s+p-2}e^{\frac{M}{1-q}t^{1-q}}-C_4\right)+c'_2+C_3\left(t^{3q+s-p-1}e^{\frac{M}{1-q}t^{1-q}}-C_5\right) \Big)\\
\leq&C+C_0\Big( C_2 t^{4q+s+p-2}+C_3 t^{3q+s-p-1}\Big)e^{\frac{M}{1-q}t^{1-q}},~~~\forall  t\geq t'_6,
\end{aligned}
\end{equation*}
where $C:=C_1+C_0c'_1+C_0c'_2$.
This means that
\begin{equation*}\label{defin}
\hat{\mathcal{E}}(t)
\leq\frac{C}{e^{\frac{M}{1-q}t^{1-q}} }+C_6\Big( t^{4q+s+p-2}+t^{3q+s-p-1}\Big),
~~~\forall~t\geq t'_6,
\end{equation*}
where $ C_6 := \max\{C_0C_2, C_0C_3\} $.

We now consider the following two cases:

\textbf{Case I}: $ q+2p\geq1 $. In this case,  $0>4q+s+p -2\geq 3q+s-p-1 $. Consequently,   there exists $ t'_7\geq t'_6 $ and $ C'_6>C_6 $ such that
\begin{equation*}
\hat{\mathcal{E}}(t)
\leq\frac{C}{e^{\frac{M}{1-q}t^{1-q}} }+C'_6 t^{4q+s+p-2},
~~~\forall~t\geq t'_7.
\end{equation*}
By $(\ref{def})$, it is easy to show that
\begin{equation*}
\mathcal{L}_t(x(t),y_t)-\mathcal{L}_t(x_t,y(t))= \mathcal{O}\left(\frac{1}{t^{2-2q-p}} \right),~~~\mbox{as}~t\to +\infty,
\end{equation*}
and
\begin{equation}\label{de23}
\|x(t)-x_t\|+\|y(t)-y_t\|= \mathcal{O}\left(\frac{1}{t^{1-2q-\frac{1}{2}(s+p)}} \right),~~~\mbox{as}~t\to +\infty.
\end{equation}

\textbf{Case II}: $ q+2p<1 $.  In this case,  $0>3q+s-p-1> 4q+s+p -2$. Therefore,  there exists $ t''_7\geq t'_6 $ and $ C''_6>C_6$ such that
\begin{equation*}
\hat{\mathcal{E}}(t)
\leq\frac{C}{e^{\frac{M}{1-q}t^{1-q}} }+C''_6 t^{3q+s-p-1},
~~~\forall~t\geq t''_7.
\end{equation*}
Taking into account $(\ref{def})$ again, we have
\begin{equation*}
\mathcal{L}_t(x(t),y_t)-\mathcal{L}_t(x_t,y(t))= \mathcal{O}\left(\frac{1}{t^{1-q+p}} \right),~~~\mbox{as}~t\to +\infty,
\end{equation*}
and
\begin{equation}\label{de230}
\|x(t)-x_t\|+\|y(t)-y_t\|= \mathcal{O}\left(\frac{1}{t^{\frac{1}{2}(1+p-s)-\frac{3}{2}q}} \right),~~~\mbox{as}~t\to +\infty.
\end{equation}

Finally, by  Lemma \ref{xtyt}, we have  $ \lim_{t\to +\infty} \|(x_t,y_t)-(\bar{x}^*,\bar{y}^*)\| =0$.  Then, it follows  from $(\ref{de23})$  and  $(\ref{de230})$  that
\begin{equation*}
\lim_{t\to +\infty} \|(x(t),y(t))-(\bar{x}^*,\bar{y}^*)\| =0.
\end{equation*}
 The proof is complete.
\end{proof}

By Theorem \ref{slow}, we have the following result.
\begin{corollary}\label{coro41}
 Let $ (x ,y ):\left[t_0, +\infty\right)\to\mathbb{R}^n \times \mathbb{R}^m  $ be a global solution of the dynamical system \textup{(\ref{dyn})}.
Assume that the basic hypotheses of Theorem $\ref{slow}$ are fulfilled. Then, for $ (\bar{x}^*,\bar{y}^*)=\textup{Proj}_{\mathrm{\Omega}}0$,  the following results hold.
\begin{enumerate}
\item[{\rm (i)}] If  $ q+2p\geq1 $, one has
 \begin{equation*}
 \mathcal{L}(x(t),\bar{y}^*)-\mathcal{L}(\bar{x}^*,y(t))=\mathcal{O}\left(\frac{1}{t^{1-2q-\frac{1}{2}(s+p)}}+\frac{1}{t^p} \right),~~~\textup{as}~t\to +\infty.
 \end{equation*}
\item[{\rm (ii)}]  If  $ q+2p<1 $, one has
 \begin{equation*}
  \mathcal{L}(x(t),\bar{y}^*)-\mathcal{L}(\bar{x}^*,y(t))=\mathcal{O}\left(\frac{1}{t^{\frac{1}{2}(1+p-s)-\frac{3}{2}q}}+\frac{1}{t^p} \right),~~~\textup{as}~t\to +\infty.
  \end{equation*}
\end{enumerate}
\end{corollary}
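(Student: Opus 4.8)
The plan is to transfer the estimates of Theorem~\ref{slow}, which are stated for the \emph{regularized} saddle point $(x_t,y_t)$ and the augmented Lagrangian $\mathcal{L}_t$, to the \emph{original} Lagrangian gap measured at the minimal-norm solution $(\bar{x}^*,\bar{y}^*)$. First I would insert the two mixed evaluations $\mathcal{L}(x(t),y_t)$ and $\mathcal{L}(x_t,y(t))$ and write
\begin{equation*}
\mathcal{L}(x(t),\bar{y}^*)-\mathcal{L}(\bar{x}^*,y(t))=T_1(t)+T_2(t)+T_3(t),
\end{equation*}
where $T_1(t)=\mathcal{L}(x(t),\bar{y}^*)-\mathcal{L}(x(t),y_t)$, $T_2(t)=\mathcal{L}(x(t),y_t)-\mathcal{L}(x_t,y(t))$ and $T_3(t)=\mathcal{L}(x_t,y(t))-\mathcal{L}(\bar{x}^*,y(t))$. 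The left-hand side is nonnegative because $(\bar{x}^*,\bar{y}^*)\in\Omega$ is a saddle point of $\mathcal{L}$, so it suffices to bound each $T_i(t)$ from above. Throughout I would use that $(x(t),y(t))$ is bounded (it converges to $(\bar{x}^*,\bar{y}^*)$ by Theorem~\ref{slow}) and that $\|x_t\|,\|y_t\|\le\|\bar{z}^*\|$, so that $\|\bar{y}^*-y_t\|$ and $\|\bar{x}^*-x_t\|$ are merely \emph{bounded} (Lemma~\ref{xtyt}).

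For the middle term $T_2(t)$ I would substitute the definition~(\ref{asd}) of $\mathcal{L}_t$, giving
\begin{equation*}
T_2(t)=\big(\mathcal{L}_t(x(t),y_t)-\mathcal{L}_t(x_t,y(t))\big)-\frac{c}{2t^p}\big(\|x(t)\|^2-\|x_t\|^2+\|y(t)\|^2-\|y_t\|^2\big).
\end{equation*}
The first bracket is exactly the augmented gap estimated in Theorem~\ref{slow}, and the remaining term is $\mathcal{O}(t^{-p})$ by the boundedness of all four norms. For the outer terms I would exploit convexity of $f$ and $g$ together with the optimality conditions~(\ref{KKT}) for $(x_t,y_t)$. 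Writing $T_1(t)=\langle Kx(t),\bar{y}^*-y_t\rangle-g(\bar{y}^*)+g(y_t)$ and using $g(y_t)-g(\bar{y}^*)\le-\langle\nabla g(y_t),\bar{y}^*-y_t\rangle$ together with $\nabla g(y_t)=Kx_t-\frac{c}{t^p}y_t$ from~(\ref{KKT}) yields
\begin{equation*}
T_1(t)\le\langle K(x(t)-x_t),\bar{y}^*-y_t\rangle+\frac{c}{t^p}\langle y_t,\bar{y}^*-y_t\rangle.
\end{equation*}
By Cauchy--Schwarz and the boundedness of $\|\bar{y}^*-y_t\|$, the first term is $\mathcal{O}(\|x(t)-x_t\|)$ and the second is $\mathcal{O}(t^{-p})$. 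A symmetric computation for $T_3(t)$, using convexity of $f$ and $\nabla f(x_t)=-\frac{c}{t^p}x_t-K^*y_t$, gives $T_3(t)\le\langle K^*(y(t)-y_t),x_t-\bar{x}^*\rangle-\frac{c}{t^p}\langle x_t,x_t-\bar{x}^*\rangle$, which is $\mathcal{O}(\|y(t)-y_t\|+t^{-p})$.

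Combining the three bounds gives
\begin{equation*}
\mathcal{L}(x(t),\bar{y}^*)-\mathcal{L}(\bar{x}^*,y(t))=\mathcal{O}\Big(\big(\mathcal{L}_t(x(t),y_t)-\mathcal{L}_t(x_t,y(t))\big)+\|x(t)-x_t\|+\|y(t)-y_t\|+t^{-p}\Big),
\end{equation*}
and inserting the rates from Theorem~\ref{slow} finishes the argument once I check, in each case, that the augmented-gap contribution is absorbed. In case~(i), $2-2q-p\ge 1-2q-\tfrac12(s+p)$ (equivalently $1+\tfrac12 s-\tfrac12 p\ge0$), so the gap term is dominated by the distance term $t^{-(1-2q-\frac12(s+p))}$; in case~(ii), $1-q+p>p$, so the gap term is dominated by $t^{-p}$. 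This reproduces the two stated rates. The main obstacle is conceptual rather than computational: Lemma~\ref{xtyt} supplies only $\|(x_t,y_t)-(\bar{x}^*,\bar{y}^*)\|\to0$ with \emph{no rate}, so the decomposition must be arranged so that the uncontrolled differences $\bar{y}^*-y_t$ and $x_t-\bar{x}^*$ appear \emph{only} multiplied either by a factor of known decay (the $K$-terms, governed by $\|x(t)-x_t\|$ and $\|y(t)-y_t\|$) or by the regularization factor $\tfrac{c}{t^p}$; routing $T_1$ and $T_3$ through the KKT identities~(\ref{KKT}) is exactly what achieves this.
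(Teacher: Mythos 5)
Your proposal is correct and follows essentially the same route as the paper: both reduce $\mathcal{L}(x(t),\bar{y}^*)-\mathcal{L}(\bar{x}^*,y(t))$ to the augmented gap $\mathcal{L}_t(x(t),y_t)-\mathcal{L}_t(x_t,y(t))$ plus terms of order $\|x(t)-x_t\|$, $\|y(t)-y_t\|$ and $t^{-p}$, and then invoke Theorem \ref{slow} together with the exponent comparisons you verify. The only cosmetic difference is that the paper obtains this estimate by applying the saddle-point inequality (\ref{Saddlepoint-t}) to $\mathcal{L}_t$ at $(\bar{x}^*,\bar{y}^*)$ and then isolating the bilinear cross terms $\langle K(x(t)-x_t),y_t-\bar{y}^*\rangle$ and $\langle K(\bar{x}^*-x_t),y(t)-y_t\rangle$, whereas you telescope into $T_1+T_2+T_3$ and bound the outer terms via convexity of $f,g$ and the optimality conditions (\ref{KKT}) -- two equivalent ways of encoding the same information, leading to the identical final bound.
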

\begin{proof}
Let $ (x_t,y_t) $ be the saddle point of $ \mathcal{L}_t $.  Clearly,
\begin{align}\label{lt-1}
\mathcal{L}_t(x(t),y_t)-\mathcal{L}_t(x_t,y(t))=&\mathcal{L}_t(x(t),y_t)-\mathcal{L}_t(x_t,y_t)+\mathcal{L}_t(x_t,y_t)-\mathcal{L}_t(x_t,y(t))\nonumber\\
\geq&\mathcal{L}_t(x(t),y_t)-\mathcal{L}_t(\bar{x}^*,y_t)+\mathcal{L}_t(x_t,\bar{y}^*)-\mathcal{L}_t(x_t,y(t))\nonumber\\
\geq&\mathcal{L}(x(t),y_t)-\mathcal{L}(\bar{x}^*,y_t)+\mathcal{L}(x_t,\bar{y}^*)-\mathcal{L}(x_t,y(t))\\
&-\frac{c}{2t^p}(\|\bar{x}^*\|^2+\|\bar{y}^*\|^2),\nonumber
\end{align}
where the  two inequalities are deduced from  (\ref{Saddlepoint-t}) and (\ref{asd}), respectively.
Note that
\begin{align}\label{lt-3}
&\mathcal{L}(x(t),y_t)-\mathcal{L}(\bar{x}^*,y_t)+\mathcal{L}(x_t,\bar{y}^*)-\mathcal{L}(x_t,y(t))\nonumber\\
=&\mathcal{L}(x(t),\bar{y}^*)-\mathcal{L}(\bar{x}^*,y(t))+\left \langle K(x(t)-x_t),y_t-\bar{y}^*\right \rangle
+\left \langle K(\bar{x}^*-x_t),y(t)-y_t\right \rangle\nonumber\\
\geq&\mathcal{L}(x(t),\bar{y}^*)-\mathcal{L}(\bar{x}^*,y(t))-\| K\| \|x(t)-x_t\|\|y_t-\bar{y}^*\|\\
&-\| K\|\|\bar{x}^*-x_t\|\|y(t)-y_t\|.\nonumber
\end{align}
By combining (\ref{lt-1}) and (\ref{lt-3}), we have
\begin{equation}\label{lt-30}
\begin{aligned}
\mathcal{L}(x(t),\bar{y}^*)-\mathcal{L}(\bar{x}^*,y(t))
&\leq\mathcal{L}_t(x(t),y_t)-\mathcal{L}_t(x_t,y(t))+\frac{c}{2t^p}(\|\bar{x}^*\|^2+\|\bar{y}^*\|^2)\\
&~+\| K\|\left(\|x(t)-x_t\|\|y_t-\bar{y}^*\|+\|x_t-\bar{x}^*\|\|y(t)-y_t\|\right).
\end{aligned}
\end{equation}
Now,  we consider the following two cases:

\textbf{Case I}: $ q+2p\geq1 $.
In this case, by $ 4q+s+p-2<0 $, we have $$ 0<1-2q-\frac{1}{2}(p+s) <2-4q-s-p<2-2q-p.$$
 Combining (\ref{lt-30}),  Theorem \ref{slow} (i) and  $ \lim_{t\to +\infty} \|(x_t,y_t)-(\bar{x}^*,\bar{y}^*)\| =0$, and using the inequalities above, we can deduce that
 \begin{equation*}
 \mathcal{L}(x(t),\bar{y}^*)-\mathcal{L}(\bar{x}^*,y(t))=\mathcal{O}\left(\frac{1}{t^{1-2q-\frac{1}{2}(s+p)}}+\frac{1}{t^p} \right),~~~\textup{as}~t\to +\infty.
 \end{equation*}

\textbf{Case II}: $ q+2p<1 $.
In this case,  by $ 3q+s-p-1<0 $, we obtain 
\begin{equation*}
 0<\frac{1}{2}(1+p-s)-\frac{3}{2}q<1+p-s-3q<1+p-q .
 \end{equation*}
Combining  (\ref{lt-30}),  Theorem \ref{slow} (ii) and $ \lim_{t\to +\infty} \|(x_t,y_t)-(\bar{x}^*,\bar{y}^*)\| =0$, and using the inequalities above, we can deduce that
\begin{equation*}
      \mathcal{L}(x(t),\bar{y}^*)-\mathcal{L}(\bar{x}^*,y(t))=\mathcal{O}\left(\frac{1}{t^{\frac{1}{2}(1+p-s)-\frac{3}{2}q}}+\frac{1}{t^p} \right),~~~\textup{as}~t\to +\infty.
  \end{equation*}
The proof is complete.
\end{proof}
\begin{remark}
For solving the unconstrained optimization problem $(\ref{unconstrained})$, \cite[Theorem 10]{L2023O}, \cite[Theorem 2]{Attouch2023A} and \cite[Theorem 10]{L2024S} have demonstrated that, only under the setting of the parameters involved, the convergence rate of the objective function value and the strong convergence of the trajectory can be  achieved simultaneously.  Theorem \ref{slow} and Corollary \ref{coro41} show that, when solving  the convex-concave bilinear saddle point problem $(\ref{PD})$, the   convergence rate  of the primal-dual gap and the strong convergence of trajectory  can also be obtained simultaneously. Thus, Theorem \ref{slow} and Corollary \ref{coro41} can be regarded as an extension of this approach used in \cite{L2023O, Attouch2023A,L2024S} from solving unconstrained optimization problems to solving convex-concave bilinear saddle point problems.
\end{remark}

\section{Numerical experiments}
In this section,  we validate the obtained theoretical results by two numerical examples. In these experiments, the dynamical system $(\ref{dyn})$ is solved numerically with a Runge-Kutta adaptive method (ode45 in
MATLAB version R2019b).  All codes are performed  on a PC (with 2.30GHz Intel Core i5-8300H and 8GB memory).

\begin{example}\label{example5.1} Let $x:=(x_1,x_2)\in \mathbb{R}^2$ and $y:=(y_1,y_2)\in\mathbb{R}^2$.
\begin{equation}\label{example1}
\min_{x\in \mathbb{R}^2}\max_{y\in \mathbb{R}^2} (mx_1+nx_2)^2+(mx_1+nx_2)(jy_1+ky_2)-(jy_1+ky_2)^2,
\end{equation}
where $ m,$ $n$, $j$, $k\in \mathbb{R} $.
It is easy to show that the solution set of problem (\ref{example1}) is $$ \left\{(x,y)\in\mathbb{R}^2\times\mathbb{R}^2~|~mx_1+nx_2=0  \mbox{ and }  jy_1+ky_2=0\right\}.
$$
Clearly,  $(\bar{x}^*,\bar{y}^*)= (0,0,0,0) $ is the minimal norm   solution of problem (\ref{example1}). We consider the following two cases:
\begin{itemize}
\item $  m=1$,  $n=10$, $j=10$, $k=1 $.
\item $  m=2,$  $n=5$, $j=3$, $k=10 $.
\end{itemize}

In the first experiment, for any $(x^*,y^*)\in \mathrm{\Omega}$, we consider the  influence of Hessian-driven damping on the convergence of the primal-dual gap.

The dynamical system $(\ref{dyn})$ is solved on the time interval $[1, 50]$. Consider the initial time $ t_0=1 $ and take the following initial conditions:
\begin{equation}\label{initial}
x(t_0)=\left[\begin{matrix}
1\\
1.5
\end{matrix}\right],~~~y(t_0)=\left[\begin{matrix}
1\\
1.5
\end{matrix}\right],~~~\dot{x}(t_0)=\left[\begin{matrix}
1\\
1
\end{matrix}\right],~~~\mbox{and}~~~\dot{y}(t_0)=\left[\begin{matrix}
1\\
1
\end{matrix}\right].~~~
\end{equation}
 We set $ \alpha=2.5 $, $ q=0.42 $, $ s=0.005 $, $ p=0.268 $, $c=10$, which are satisfied with the assumptions in Theorem \ref{slow}.
 Figure \ref{f1} displays the behavior of $ \mathcal{L}(x(t),y^*)-\mathcal{L}(x^*,y(t)) $ along the trajectory  generated by the dynamical system (\ref{dyn}) under different settings on
 $ \gamma\in\{0, 0.8, 1.0, 1.2, 1.4\} $.
 \vspace{-1em}
 \begin{figure}[H]%
     \centering
     \subfloat[$  m=1,  n=10, j=10, k=1 $ ]{
         \includegraphics[width=0.48\linewidth]{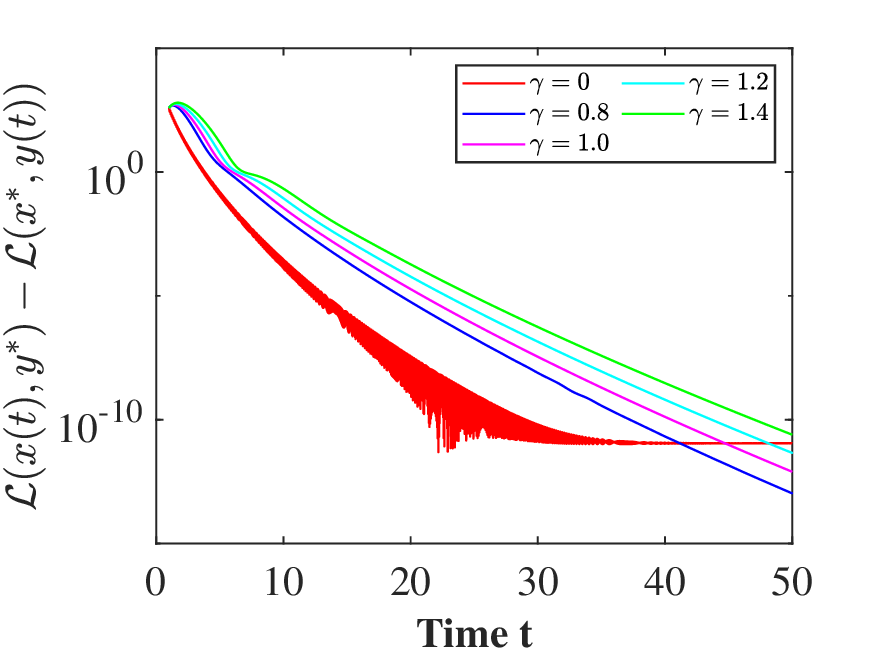}
         }\hfill
     \subfloat[$  m=2,  n=5, j=3, k=10 $]{
         \includegraphics[width=0.48\linewidth]{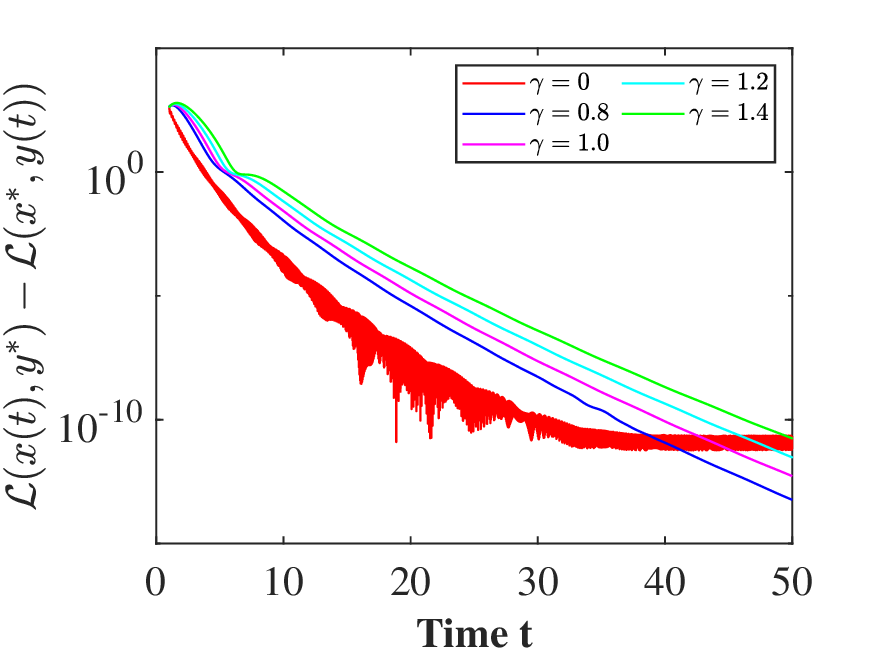}
         }
     \caption{Convergence  analysis of the dynamical system (\ref{dyn}) with different parameter $ \gamma $}
     \label{f1}
 \end{figure}\vspace{-1em}
   As shown in Figure \ref{f1}, the numerical results  are in  agreement with the theoretical claims. In each case,  $ \mathcal{L}(x(t),y^*)-\mathcal{L}(x^*,y(t)) $  converges more smoothly when the system is controlled by Hessian-driven damping.

In the second experiment,  we investigate the strong convergence of the trajectory to the minimal norm solution $(\bar{x}^*,\bar{y}^*)= (0,0,0,0)$  as well as the influence of Hessian-driven damping on the behaviors of trajectories.

Take $ \alpha=2.5 $, $ q=0.42 $, $ s=0.005 $ and $ p=0.268 $. Under the initial conditions (\ref{initial}), we plot the trajectory $ (x(t),y(t)) $ generated by the dynamical system (\ref{dyn}) in the following three cases:
\begin{itemize}
\item $ c=0$, $\gamma=0 $  (i.e., dynamical system (\ref{dyn}) without Tikhonov regularization and Hessian-driven damping).
\item $ c=0,$ $\gamma=0.8 $   (i.e.,  dynamical system (\ref{dyn}) without Tikhonov regularization).
\item $ c=10,$ $\gamma=0.8 $.
\end{itemize}\vspace{-1em}
\begin{figure}[H]
 \centering
    \subfloat[$ c=0,~\gamma=0 $ ]{
        \includegraphics[width=0.31\linewidth]{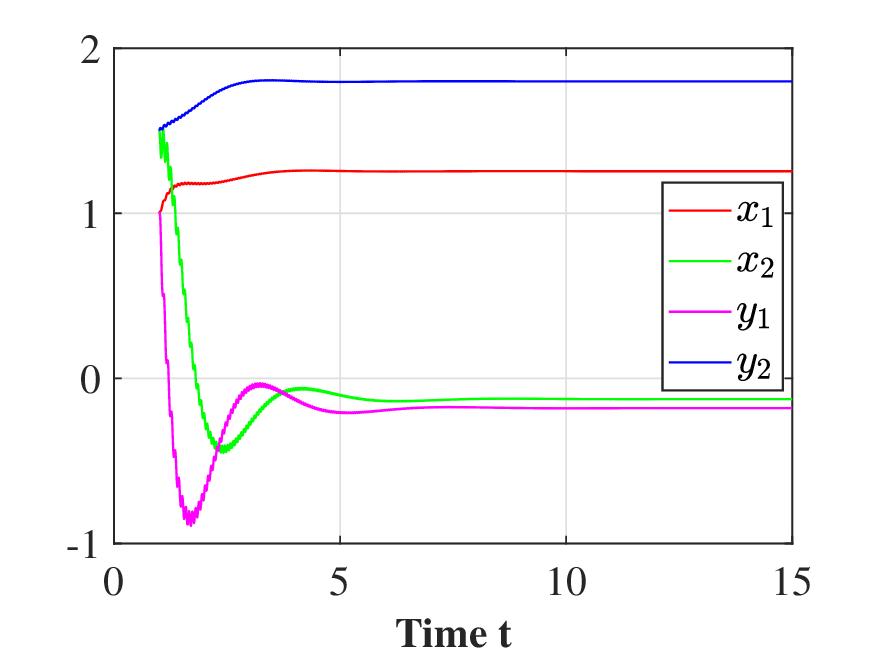}
        }\hfill
    \subfloat[$c=0,~ \gamma=0.8 $ ]{
        \includegraphics[width=0.31\linewidth]{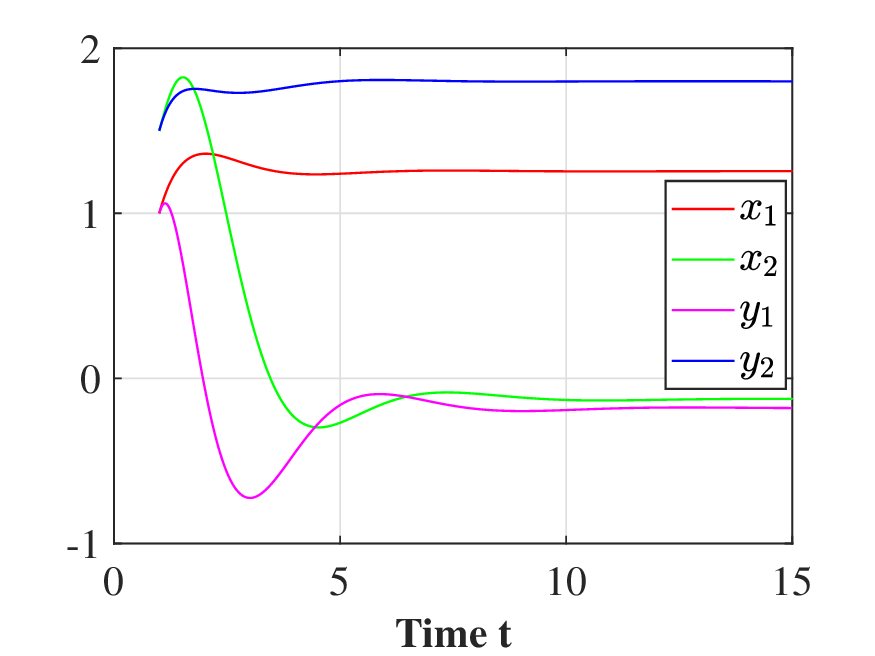}
        }
    \subfloat[$c=10,~ \gamma=0.8 $ ]{
            \includegraphics[width=0.31\linewidth]{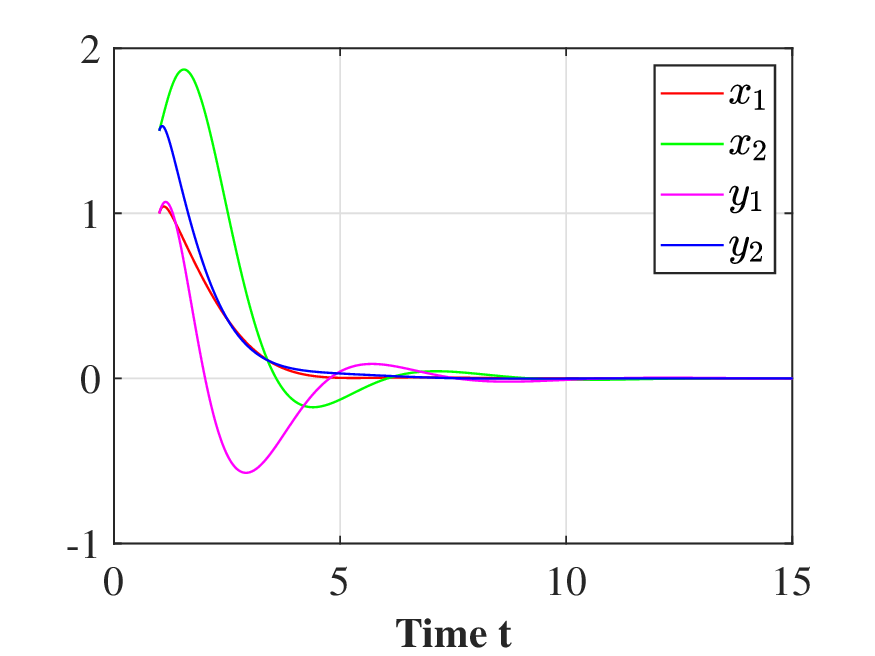}
            }
    \caption{Convergence of trajectories when $  m=1,  n=10, j=10, k=1 $}
    \label{f2}
\end{figure} \vspace{-2em}
\begin{figure}[H]
 \centering
    \subfloat[$ c=0,~\gamma=0 $ ]{
        \includegraphics[width=0.31\linewidth]{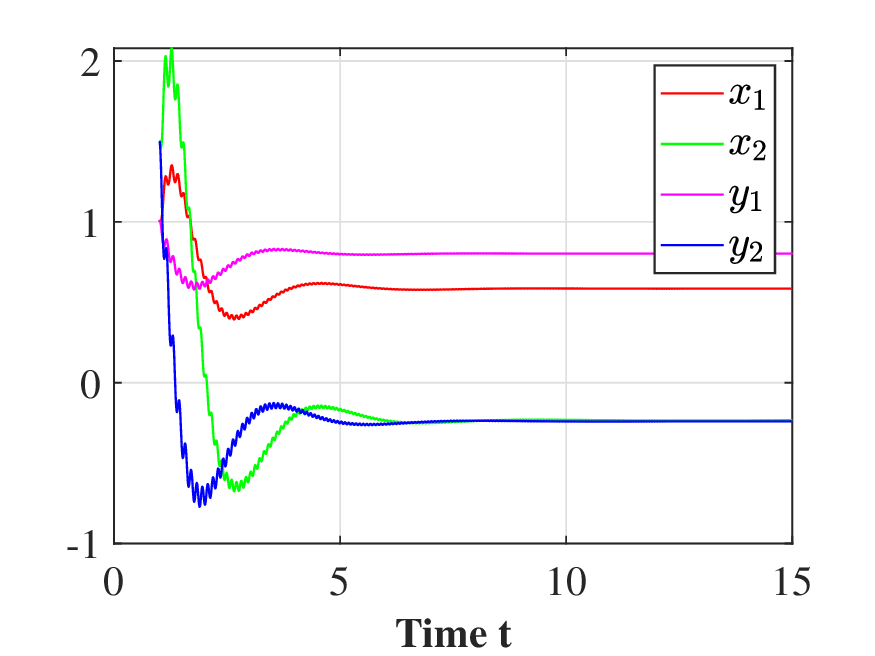}
        }\hfill
    \subfloat[$c=0,~ \gamma=0.8 $ ]{
        \includegraphics[width=0.31\linewidth]{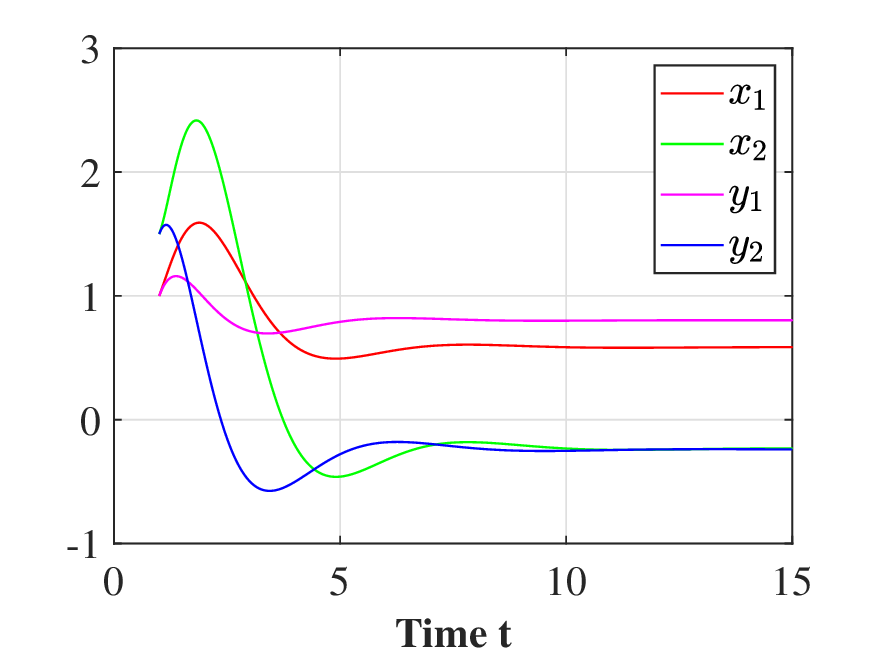}
        }
    \subfloat[$c=10,~ \gamma=0.8 $ ]{
            \includegraphics[width=0.31\linewidth]{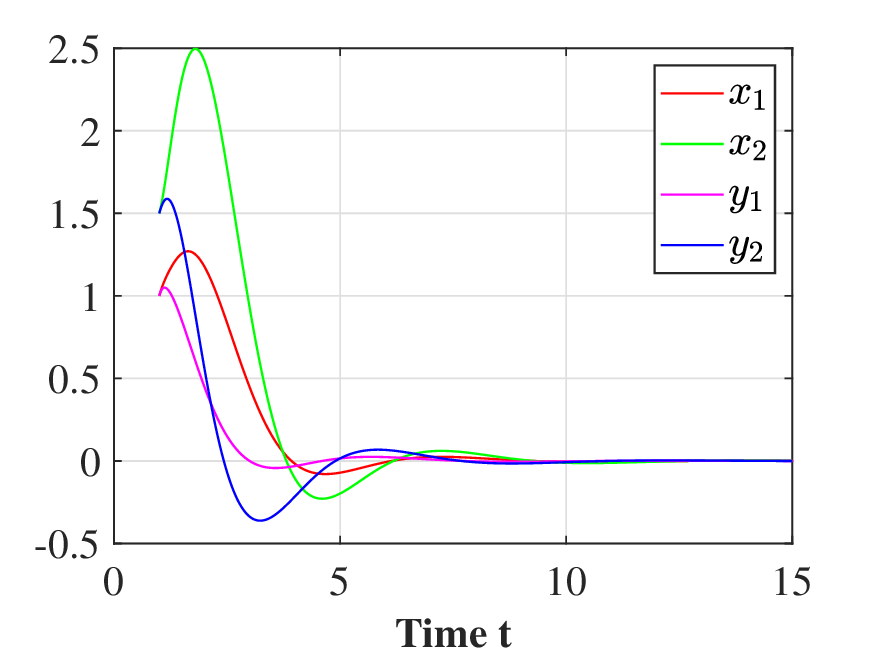}
            }
    \caption{Convergence of trajectories when $  m=2,  n=5, j=3, k=10 $}
    \label{f3}
\end{figure}

As shown in Figures \ref{f2} and \ref{f3}, we can observe that
 \begin{itemize}
\item[{\rm (i)}] Only when the dynamical system (\ref{dyn}) is controlled by Tikhonov regularization  does the trajectory   $ (x(t),y(t)) $ converge  to the minimal norm solution $ (\bar{x}^*,\bar{y}^*)=(0,0,0,0) $.

\item[{\rm (ii)}] When the dynamical system (\ref{dyn}) is controlled by Hessian-driven damping (i.e., with $ \gamma=0.8 $),  the trajectory $ (x(t),y(t)) $ converges more smoothly compared to the case  where $ \gamma=0 $.

    \end{itemize}

\end{example}

\begin{example}\label{example5.2} Let $x\in \mathbb{R}^n$ and $y\in\mathbb{R}^m$. Consider the following $ \ell_2 $-regularized problem:
\begin{equation} \label{exp5.2}
\min_{x\in \mathbb{R}^n} \Phi(x)=\frac{1}{2}\|Kx-b\|^2+\eta \|x\|^2,
\end{equation}
where $ K\in \mathbb{R}^{m\times n} $   and
$ b\in\mathbb{R}^m $. Equivalently, its saddle point formulation reads as:
\begin{equation*}\label{example5.2-1}
\min_{x\in \mathbb{R}^n}\max_{y\in \mathbb{R}^m} \eta \|x\|^2+\left \langle Kx,y\right \rangle-\Big(\frac{1}{2}\|y\|^2+\left \langle b,y\right \rangle\Big).
\end{equation*}

In the following numerical experiment, we are interested in  the  influence of Hessian-driven damping on the convergence of objective function value of problem (\ref{exp5.2}).

The dynamical system $(\ref{dyn})$ is solved on the time interval $[1, 200]$. All entries of initial conditions, $ K $ and $ b $ are generated from the standard Gaussian distribution. Moreover, three different dimensional settings are considered:
\begin{itemize}
\item  $ m=20 $, $ n=50 $.
\item  $ m=50 $, $ n=100 $.
\item  $ m=100 $, $ n=200 $.
\end{itemize}

  Take $ \eta=1 $, $ \alpha=3 $, $ c=5 $, $s=0.4$ and $ p=2.3 $. We test the dynamical system $(\ref{dyn})$   under the following settings on parameters $q$ and $\gamma$:
\begin{itemize}
\item   $ q=0.6,$ $\gamma=0 $.
\item    $ q=0.7,$ $\gamma=0 $.
\item   $ q=0.8,$ $\gamma=0 $.
\item     $ q=0.6,$ $\gamma=0.2 $.
\item    $ q=0.7,$ $\gamma=0.2 $.
\item    $ q=0.8,$ $\gamma=0.2 $.
\end{itemize}

 Figure \ref{HD-exp2} depicts the behaviors of the objective error $ \Phi(x(t))-\Phi(x^*)$  along the trajectory  generated by the dynamical system (\ref{dyn}) with different dimensions.
 \vspace{-1em}
 \begin{figure}[H]%
     \centering
         \subfloat[$m=20,~ n=50 $ ]{
             \includegraphics[width=0.31\linewidth]{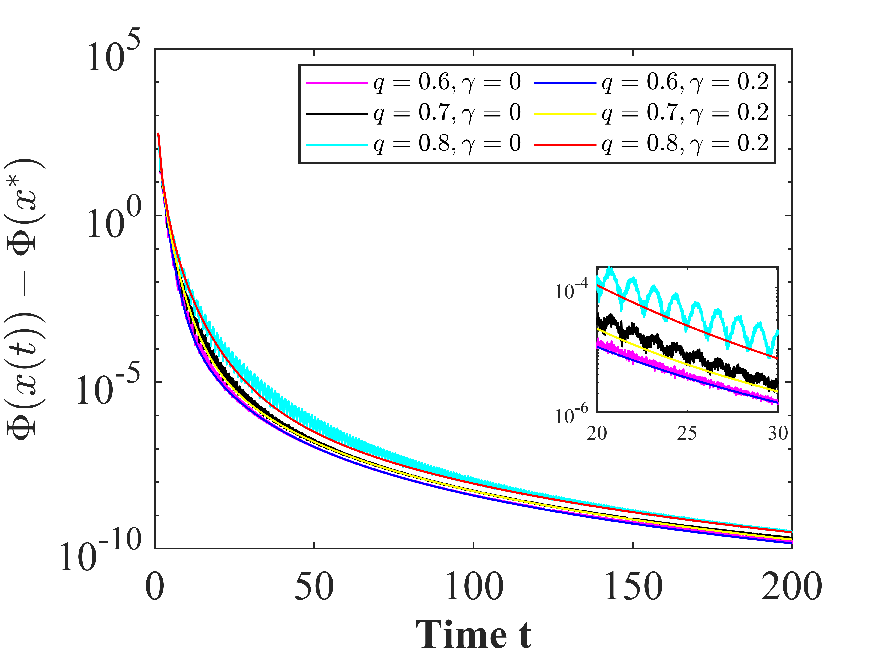}
             }
         \subfloat[$m=50,~ n=100 $ ]{
                 \includegraphics[width=0.31\linewidth]{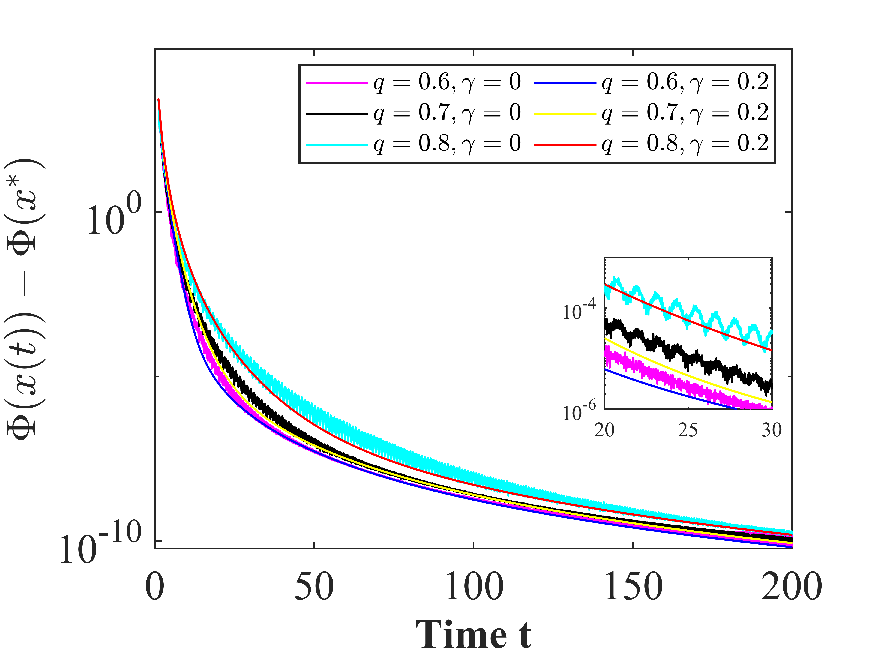}
                 }
         \subfloat[$m=100,~ n=200 $ ]{
                  \includegraphics[width=0.31\linewidth]{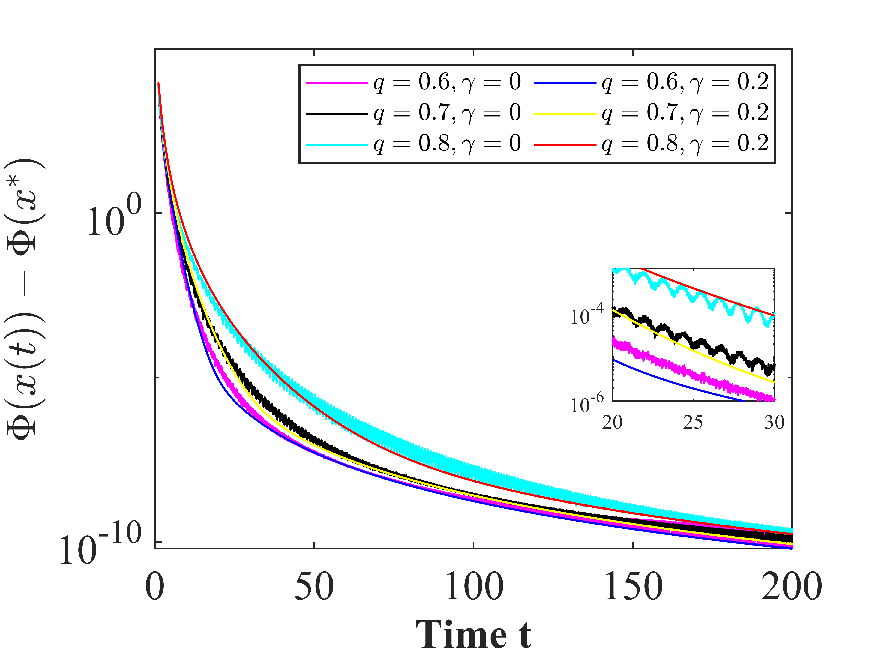}
                 }
     \caption{Convergence  analysis of the dynamical system (\ref{dyn}) under different dimensional settings}
     \label{HD-exp2}
 \end{figure}\vspace{-1em}

As shown in Figure \ref{HD-exp2}, under different dimensional settings, the dynamical system (\ref{dyn}) controlled by Hessian-driven damping can significantly reduce oscillations.

\end{example}

\section{Conclusion}
 In this paper, we introduce a Tikhonov regularized second-order primal-dual dynamical system   (\ref{dyn})  to solve the convex-concave bilinear saddle point problem $(\ref{PD})$. Compared with the dynamical systems introduced in \cite{2024he,ding,ours} for solving the convex-concave bilinear saddle point problem $(\ref{PD})$, the  system (\ref{dyn})  not only encompasses slow  viscous damping, extrapolation and time scaling, but  is also controlled by a  Hessian-driven damping. By using appropriate damping, extrapolation,  scaling and Tikhonov regularization  coefficients, we establish the fast convergence rates of the values  and the strong convergence property of the generated trajectory. Through numerical experiments, we observe that the dynamical system involved in Hessian-driven damping can neutralize the oscillations occurring during iterations.

In the future, a  natural  direction for research is to investigate the explicit discretization  of the  system (\ref{dyn}), which leads to an inertial algorithm of gradient type that incorporates a Tikhonov regularization term for solving the convex-concave bilinear saddle point problem $(\ref{PD})$. Furthermore, based on the results obtained in the present paper, we can  also explore the strong convergence properties of the corresponding  algorithm. On the other hand, as we all know, multi-objective optimization is also  an active area of research in recent years and has received extensive attention by many researchers \cite{chu18,sunjota21,jcm24,sc24,jota25g}. It is also of importance to consider how the second-order dynamical system approach for solving multi-objective optimization \cite{att15jmaa,jota24s,siam24} can be extended to handle vector convex-concave saddle point problems.



\section*{Funding}
\small{  This research is supported by the Natural Science Foundation of Chongqing (CSTB2024NSCQ-MSX0651 and CSTB2024NSCQ-MSX1282) and the Team Building Project for Graduate Tutors in Chongqing (yds223010).}

\section*{Data availability}

 \small{ The authors confirm that all data generated or analysed during this study are included in this article.}

 \section*{Declaration}

 \small{\textbf{Conflict of interest} No potential conflict of interest was reported by the authors.}

\end{document}